\documentclass[11pt,a4paper, leqno]{article} 
\usepackage[centertags]{amsmath}
\usepackage{amsfonts}
\usepackage{amssymb}
\usepackage{amsthm}
\usepackage{epsfig}
\usepackage{setspace}
\usepackage{comment}
\usepackage{ae} 
\usepackage[authoryear]{natbib}
\setcounter{MaxMatrixCols}{30}%

\usepackage{graphicx}

\theoremstyle{plain}
\newtheorem{thm}{Theorem}[section]
\newtheorem{lem}[thm]{Lemma}
\newtheorem{cor}[thm]{Corollary}
\newtheorem{prop}[thm]{Proposition}
\theoremstyle{definition}

\theoremstyle{remark}

\newtheorem{rem}[thm]{Remark}

\newcommand{\E}{\operatorname{E}}
\newcommand{\Var}{\operatorname{Var}}
\renewcommand{\P}{\operatorname{P}}

\newcommand{\BCE}{\boldsymbol{\mathcal{S}}}
\newcommand{\CE}{\mathcal{S}}

\newcommand{\keywords}[1]{\par\noindent\emph{Keywords:} #1 \\}

\begin{document}
\title{On the rates of convergence of simulation based optimization algorithms  for optimal stopping problems}
\author{Denis Belomestny$^{1,\,}$\thanks{supported in part by the SFB 649 `Economic Risk'.
}}
\footnotetext[1]{Weierstrass Institute for Applied Analysis and
Stochastics, Mohrenstr. 39, 10117 Berlin, Germany.
{\tt{belomest@wias-berlin.de}}. }
\maketitle
\begin{abstract}
In this paper we  study simulation based optimization algorithms for solving discrete time optimal stopping problems. This type of algorithms became  popular among practioneers working in the area of quantitative finance. Using large deviation theory for the increments of empirical processes, we derive optimal convergence rates  and show that they can not be improved in general.  The rates derived  provide a guide to the choice of the number of  simulated paths needed in optimization step, which is crucial for the good performance of  any simulation based optimization algorithm. Finally, we present a numerical example of solving optimal stopping problem arising in option pricing that illustrates our theoretical findings.
\end{abstract}
\keywords{optimal stopping, simulation based algorithms, entropy with bracketing, increments of empirical processes}
\section{Introduction}
The theory of optimal stopping is concerned with the problem of choosing a time to take a particular action, in order to maximise an expected reward or minimise an expected cost. Optimal stopping problems can be found in many areas of statistics, economics, and mathematical finance. They can often be written in the form of a Bellman equation, and are therefore often solved using dynamic programming. Results on optimal stopping were first developed in the discrete case. The formulation of optimal
stopping problems for discrete stochastic processes was in sequential analysis, an area of
mathematical statistics where the number of observations is not fixed in advance but is a random
number determined by the behavior of the data being observed.
\citet{SN} was the first person to come up with results on optimal stopping theory for stochastic
processes in discrete time.
We refer to the book of \citet{PS} for a comprehensive review on different  aspects of optimal stopping problems.
\par
A huge impetus to the development of optimal stopping theory was provided by option pricing theory, developed in the late 1960s and the 1970s. According to the modern financial theory, pricing an American option in a complete market is equivalent to solving an optimal stopping problem (with a corresponding generalization in incomplete markets), the optimal stopping time being the rational time for the option to be exercised. Due to the enormous importance of the early exercise feature in finance, this line of research has been intensively pursued in recent times.
Solving the optimal stopping problem and hence pricing an American option is
straightforward in low dimensions. However, many problems arising
in practice  have high dimensions, and these applications
have motivated the development of Monte Carlo methods for pricing American option.
Solving  a high-dimensional optimal stopping problems or pricing American style derivatives with Monte Carlo is a challenging task because
the determination of the optimal value function  requires a backwards dynamic
programming algorithm that appears to be incompatible with the forward nature
of Monte Carlo simulation. Much research was focused on the development of
fast methods to compute approximations to the optimal value function.
Notable examples include
mesh method of \citet{BG}, the regression-based approaches of \citet{Car}, \citet{LS}, \citet{TV} and \citet{E}.
All these methods aim at approximating the so called continuation values that can be used later
to construct suboptimal strategies and to produce lower bounds for the optimal value function.
The convergence analysis for this type of methods was performed in several papers including
\citet{E}, \cite{EKT} and \citet{B}.
An alternative to trying to approximate the continuation values is to find the best
value function within a class of stopping rules. This reduces
the optimal stopping problem to a much more tractable finite dimensional
optimization problem. Such optimization
problems appear naturally if one considers finite dimensional or parametric
approximations for the corresponding stopping regions. The latter type of algorithms became
particularly popular among practioneers (see e.g. \citet{A} or \citet{G}).
However, the practical success of simulation-based optimization algorithms
has not been yet fully explained by existing theory, and our analysis here
represents a further step toward an improved understanding. The main goal of this work is to provide rigorous convergence analysis of simulation based optimization algorithms
for discrete time optimal stopping problems.
\par
Let us start with a general stochastic programming problem
\begin{eqnarray}
\label{SPP}
    h^{*}:=\min_{\theta\in \Theta} \E_{\P}[h(\theta,\xi)],
\end{eqnarray}
where \( \Theta \) is a subset of \( \mathbb{R}^{m} \), \( \xi  \) is a \( \mathbb{R}^{d} \) valued  random variable on the probability space \( (\Omega,\mathcal{F}, \P) \) and \( h: \mathbb{R}^{m}\times \mathbb{R}^{d}\to \mathbb{R}.\)
Draw an i.i.d. sample \( \xi^{(1)},\ldots,\xi^{(M)} \) from the distribution of \( \xi  \) and define
\begin{eqnarray*}
    h_{M}:=\min_{\theta\in \Theta}\left[ \frac{1}{M}\sum_{m=1}^{M} h(\theta,\xi^{(m)})\right].
\end{eqnarray*}
It is well known (see e.g. \citet{S0})  that under very mild conditions it holds \( h_{M}-h^{*}=O_{\P}(M^{-1/2}). \)
In their pioneering work \citet{SH} (see also \citet{KSH}) showed that in the case of  discrete random variable \( \xi,  \) the convergence of \( h_{N} \) to \( h^{*} \) can be much faster than \( M^{-1/2}, \) making Monte Carlo method particularly efficient in this situation.   Turn now to  the discrete time optimal stopping problem:
\begin{eqnarray}
\label{OSPI}
    V=\sup_{1\leq \tau\leq K} \E [Z_{\tau }],
\end{eqnarray}
where \( \tau  \) is a stopping time taking values in the set \( \{ 1,\ldots, K \} \) and \( (Z_{k})_{k\geq 0} \) is a Markov chain.
Since the random variable  \( \tau  \) takes only discrete values, one can ask whether the  simulation based methods in the case of discrete time optimal stopping problem \eqref{OSPI}  can be
as efficient as in the case of \eqref{SPP} with discrete r.v. \( \xi  \). In this work we give an affirmative answer to this question
by deriving the optimal rates of convergence for the corresponding Monte Carlo estimate of \( V \) based on \( M \) paths and  showing that these rates are usually faster than  \( M^{-1/2} \).

\section{Main setup}
\label{MS}
Let us consider a Markov chain \( X = (X_k)_{k\geq 0} \)
defined on a filtered probability space \( (\Omega,\mathcal{F},(\mathcal{F}_{k})_{k\geq 0},\P_{x})\)
and taking values in a measurable space \( (E,\mathcal{B}), \) where for simplicity we assume that
\( E=\mathbb{R}^{d} \) for some \( d\geq 1 \) and \( \mathcal{B}=\mathcal{B}(\mathbb{R}^{d}) \)
is the Borel \( \sigma  \)-algebra on \( \mathbb{R}^{d}. \) It is assumed
that the chain \( X \) starts at \( x \) under \( \P_x \) for some \( x\in E \) . We also assume that
the mapping \(  x\mapsto P_x(A) \) is measurable for each \( A\in \mathcal{F} \) .
Fix some natural number \( K>0. \) Given a set of measurable functions \( G_{k}: E\mapsto \mathbb{R} \), \( k=1,\ldots, K, \) satisfying
\begin{eqnarray*}
    \E_{x}\left[ \sup_{1\leq k\leq K}|G_{k}(X_{k})| \right]<\infty
\end{eqnarray*}
for all \( x\in E \) , we consider the optimal stopping problems
\begin{eqnarray}
\label{OSP}
    V^{*}_{k}(x):=\sup_{k\leq \tau\leq K }\E_{k,\, x}\left[ G_{\tau}(X_{\tau}) \right], \quad k=1,\ldots,K,
\end{eqnarray}
where for any \( x\in E \) the expectation in \eqref{OSP} is taken w.r.t.  the measure \( \P_{k,\, x} \) such that \( X_k = x \) under
 \( \P_{k,\, x} \) and the supremum is taken over all stopping times \( \tau  \) with respect to \( (\mathcal{F}_{n})_{n\geq 0}. \)
Introduce the stopping region \( \BCE^{*}=\CE^{*}_{1}\times\ldots\times \CE^{*}_{K} \)  with
\( \mathcal{S}^{*}_{K}=E \) and
\begin{multline*}
    \mathcal{S}^{*}_{k}:=\{ x\in E: V^{*}_{k}(x)=G_{k}(x) \}
    \\
    =\left\{ x\in E: \E\left[  \left.V^{*}_{k+1}(X_{k+1})\right|\mathcal{F}_{k} \right]\leq G_{k}(x) \right\}, \quad k=1,\ldots, K-1.
\end{multline*}
Introduce also the first entry times \( \tau^{*}_{k}  \) into \( \BCE^{*} \)
by setting
\begin{eqnarray*}
    \tau^{*}_{k}:=\tau_{k}(\BCE^{*}):=\min \{ k\leq l \leq K: X_{l}\in \mathcal{S}_{l} \}.
\end{eqnarray*}
It is well known that the value functions \( V^{*}_{k}(x) \) satisfy the so called Wald-Bellman equations
\begin{eqnarray*}
    V^{*}_{k}(x)=\max\{ G_{k}(x), \E_{n,x}[V^{*}_{k+1}(X_{k+1})] \}, \quad k=1,\ldots, K-1, \quad x\in E
\end{eqnarray*}
with \( V^{*}_{K}(x)\equiv G_{K}(x) \)  by definition. Moreover, the stopping times \( \tau^{*}_{k}  \) are optimal
in \eqref{OSP}, i.e.
\begin{eqnarray*}
    V^{*}_{k}(x)=\E_{k,\, x}\left[ G_{\tau^{*}_{k}}(X_{\tau^{*}_{k}}) \right], \quad k=1,\ldots,K.
\end{eqnarray*}
\par
Let \( (X^{(m)}_{k})_{k=0,\ldots,K}, \, m=1,\ldots,M \) be \( M \) independent processes with the same distribution
as \(X \) all starting from the point \( x\in E. \)
We can think of \( (X^{(1)}_{k},\ldots, X^{(M)}_{k}),\) \(k=0,\ldots,K, \) as a new process defined on the product
probability space equipped with the product measure \( \P_{x}^{\otimes M}. \)
 Let
\( \mathfrak{B} \) be a collection of sets
from the product \( \sigma  \)-algebra
\[
 \mathcal{B}^{K}:=\underbrace{\mathcal{B}\otimes\ldots \otimes \mathcal{B}}_{K}
\]
 that contains all sets
\( \BCE\in \mathcal{B}^{K} \)  of the form \( \BCE=\CE_{1}\times \ldots \times \CE_{K-1}\times E \)
with \( \CE_{k}\in \mathcal{B}, \, k=1,\ldots,K-1. \) Here we take into account  the fact that the stopping set \( \CE_{K} \)
must coincide with \( E. \) Let \( \mathfrak{S} \) be a subset of \( \mathfrak{B}. \) Define
\begin{eqnarray*}
\BCE_{M}:=\arg\max_{\BCE\in
\mathfrak{S}}\left\{ \frac{1}{M}\sum_{m=1}^{M}G_{\tau_{1}(\BCE)}
\left(X^{(m)}_{\tau_{1}(\BCE)}\right) \right\}.
\end{eqnarray*}
The stopping rule
\begin{eqnarray*}
    \tau_{M}:=\tau_{1}(\BCE_M)=\min\{ 1\leq k \leq K: X_{k}\in \mathcal{S}_{M,k} \}.
\end{eqnarray*}
is generally suboptimal
and therefore the corresponding Monte Carlo estimate
\begin{eqnarray}
\label{VMN}
V_{M,N}:=\frac{1}{N}\sum_{n=1}^{N}G_{\tau^{(n)}_{M}}
\left(\widetilde X^{(n)}_{\tau^{(n)}_{M}}\right)
\end{eqnarray}
with
\begin{eqnarray*}
    \tau^{(n)}_{M}:=\min\{ 1\leq k \leq K: \widetilde X^{(n)}_{k}\in \mathcal{S}_{M,k} \}, \quad n=1,\ldots,N
\end{eqnarray*}
based on a new, independent of \( ( X^{(1)},\ldots, X^{(M)}) \) set of
trajectories
\[
 (\widetilde X^{(n)}_{0},\ldots,\widetilde X^{(n)}_{K}), \quad n=1,\ldots,N,
\]
fulfills
\begin{eqnarray}
\label{VMN}
V_M:=\E_{x} \left[V_{M,N}|X^{(1)},\ldots,X^{(M)}\right]
&\leq&\sup_{\BCE\in
\mathfrak{S}}\E_{x}\left[G_{\tau_{1}(\BCE)}\left(X_{\tau_{1}(\BCE)}\right)\right].
\end{eqnarray}
If the set \( \mathfrak{S} \) is rich enough, then
\begin{eqnarray*}
    \sup_{\BCE\in
\mathfrak{S}}\E_{x}\left[G_{\tau_{1}(\BCE)}\left(X_{\tau_{1}(\BCE)}\right)\right]=:\E_{x}\left[G_{\tau_{1}(\bar\BCE)}
\left(X_{\tau_{1}(\bar\BCE)}\right)\right]\approx\E_{x}\left[G_{\tau_{1}(\BCE^{*})}\left(X_{\tau_{1}(\BCE^{*})}\right)\right]
\end{eqnarray*}
and \( V_{M,N} \) can serve as a good approximation for \( V^{*} \) for large enough \( M \) and \( N. \)
In the next section we are going to study the question: how fast does \( V_{M} \) converge to \( V^{*}=V^{*}_{1} \) as \( M\to \infty \) ? We will show that the corresponding rates of convergence are always faster than usual rates \( M^{-1/2}. \)  This fact has a practical implication since it indicates that \( M \), the number of simulated paths used in the optimization step, can be taken much smaller than \( N, \) the number of paths used to compute the final estimate \( V_{M,N} \).

\section{Main results}

\paragraph{Definition}
Let \( \delta>0 \) be a given number and \( d_{X}(\cdot,\cdot) \)
be a pseudedistance
between two elements of \( \mathfrak{B} \) defined as
\begin{eqnarray}
\label{DX}
&&d_{X}(G_{1}\times\ldots\times G_{K},G'_{1}\times\ldots\times G'_{K})=\sum_{k=1}^{K}\P_{x}(X(t_{k})\in G_{k}\triangle G'_{k}),
\end{eqnarray}
where \( \{ G_{k} \}\) and \( \{ G'_{k} \} \) are subsets of \( E. \)
Define \( N(\delta,\mathfrak{S},d_{X}) \)
be the smallest value  \(n \) for which there exist pairs of sets
\[
(G_{j,1}^{L}\times\ldots\times G_{j,K}^{L},G_{j,1}^{U}\times\ldots\times G_{j,K}^{U} ), \quad j=1,\ldots,n,
\]
such that \( d_{X}(G_{j,1}^{L}\times\ldots\times G_{j,K}^{L},G_{j,1}^{U}\times\ldots\times G_{j,K}^{U} ) \leq \delta\) for all
\( j=1,\ldots,n, \) and for any \( G\in \mathfrak{S} \) there exists
\( j(G)\in \{ 1,\ldots,n \} \) for which
\[
G^{L}_{j(G),k}\subseteq G_{k} \subseteq
G^{U}_{j(G),k}, \quad k=1,\ldots,K.
\]
Then the value \( \mathcal{H}(\delta,\mathfrak{S},d):=\log[N(\delta,\mathfrak{S},d_{X})] \)
is called the \( \delta \)-entropy with bracketing of \( \mathfrak{S}\) for
the pseudedistance \( d_{X} \).
\paragraph{Assumption}
We assume that the family of stopping regions \( \mathfrak{S} \) is such that
\begin{eqnarray}
\label{CA}
\mathcal{H}(\delta,\mathfrak{S},d_{X})\leq A\delta^{-\rho}
\end{eqnarray}
for some constant \( A>0 \), any \( 0<\delta<1 \) and some \( \rho>0 \).
\paragraph{Example} Let \( \mathfrak{S}=\mathfrak{S}_{\gamma} \), where \( \mathfrak{S}_{\gamma} \) is a class of
subsets of \( \overbrace{\mathbb{R}^{d}\times\ldots\times\mathbb{R}^{d}}^K \) with boundaries of H\"older smoothness \( \gamma>0 \)
defined as follows. For given \( \gamma>0 \) and \( d\geq 2 \) consider the functions
\( b(x_{1},\ldots,x_{d-1}), \) \( b:\mathbb{R}^{d-1}\to \mathbb{R} \) having continuous partial
derivatives of order \( l \), where \( l \) is the maximal integer that is strictly
less than \( \gamma \). For such functions \( b \), we denote the Taylor polynomial
of order \( l \) at a point  \( x\in \mathbb{R}^{d-1} \) by \( \pi_{b,x} \). For a given
\( H>0 \), let \( \Sigma(\gamma,H) \) be the class of functions  \( b \) such that
\begin{eqnarray*}
|b(y)-\pi_{b,x}(y)|\leq H\|x-y\|^{\gamma}, \quad x,y\in \mathbb{R}^{d-1},
\end{eqnarray*}
where \( \| y \| \) stands for the Euclidean norm of \( y\in \mathbb{R}^{d-1}. \)
Any  function \( b \) from \( \Sigma(\gamma,H) \) determines a set
\begin{eqnarray*}
S_{b}:=\{ (x_{1},\ldots,x_{d})\in \mathbb{R}^{d}: 0\leq x_{d}\leq b(x_{1},\ldots,x_{d-1})
\}.
\end{eqnarray*}
Define the class
\begin{eqnarray}
\label{BFC}
\mathfrak{S}_{\gamma}:=\{ S_{b_{1}}\times\ldots\times S_{b_{K-1}}\times E :
\, b_{1},\ldots,b_{K-1}\in \Sigma(\gamma,H) \}.
\end{eqnarray}
It can be shown \cite[see][Section 8.2]{D}  that the class \( \mathfrak{S}_{\gamma} \) fulfills
\[
\mathcal{H}(\delta,\mathfrak{S}_{\gamma},d_{X})\leq A\delta^{-(K-1)(d-1)/\gamma}
\]
for some \( A>0 \)  and all \( \delta>0\) small enough. Now we are in the position to formulate the main result of our study.
\begin{thm}
\label{UP}
Let \( \mathfrak{S} \) be a subset of  \( \mathfrak{B} \) such that assumption \eqref{CA} is fulfilled
with some \( 0<\rho\leq 1 \) and
\begin{eqnarray}
    \label{AppQ}
    &&\E_{x}\left[G_{\tau_{1}(\BCE^{*})}\left(X_{\tau_{1}(\BCE^{*})}\right)\right]-\bar V \leq D M^{-1/(1+\rho)}
\end{eqnarray}
with \( \bar V:= \sup_{\BCE\in
\mathfrak{S}}\E_{x}\left[G_{\tau_{1}(\BCE)}\left(X_{\tau_{1}(\BCE)}\right)\right] \) and some constant \( D>0. \)
Assume that  all functions \( G_{k} \) are uniformly bounded and the inequalities
\begin{eqnarray}
\label{BA}
    &&\P_{x}(|G_{k}(X_{k})-\E[V^{*}_{k+1}(X_{k+1})|\mathcal{F}_{k}]|<\delta)\leq A_{0,k}\delta^{\alpha}, \quad \delta<\delta_{0}
\end{eqnarray}
hold for  some \( \alpha>0 \), \( A_{0,k}>0, \) \( k=1,\ldots, K-1 \), and  \( \delta_{0}>0 \). Then for any \( U>U_{0} \)
and \( M>M_{0} \)
\begin{eqnarray}
\label{UBounds}
    \P^{\otimes M}_{x}\left(V^{*}-V_{M}\geq (U/M)^{\frac{1+\alpha }{2+\alpha(1+\rho)}}\right)\leq C\exp(-\sqrt{U}/B).
\end{eqnarray}
 with some constants \( U_{0}>0 \), \( M_{0}>0 \), \( B>0 \) and \( C>0. \)
\end{thm}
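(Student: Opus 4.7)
The plan is to reduce the theorem to a deviation bound for an empirical process indexed by $\mathfrak S$ and then apply the Mammen--Tsybakov template for fast rates of empirical risk minimisation.

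\textbf{Reduction to an empirical process.} Write $f_{\BCE}(X):=G_{\tau_{1}(\BCE)}(X_{\tau_{1}(\BCE)})$, $Pf_{\BCE}:=\E_{x}[f_{\BCE}(X)]$ and $P_{M}f_{\BCE}:=M^{-1}\sum_{m=1}^{M}f_{\BCE}(X^{(m)})$; since the validation paths $\widetilde X^{(n)}$ are independent of the training paths, the tower property gives $V_{M}=Pf_{\BCE_{M}}$. Set $R(\BCE):=V^{*}-Pf_{\BCE}\geq 0$; since $\BCE_{M}$ maximises $P_{M}f_{\BCE}$,
\[
V^{*}-V_{M}=R(\BCE_{M})\leq R(\bar\BCE)+(P-P_{M})(f_{\bar\BCE}-f_{\BCE_{M}}).
\]
Assumption \eqref{AppQ} controls $R(\bar\BCE)$ by $DM^{-1/(1+\rho)}$, and a direct exponent comparison shows $M^{-1/(1+\rho)}\leq M^{-(1+\alpha)/(2+\alpha(1+\rho))}$ precisely when $\rho\leq 1$. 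Hence it remains to prove the announced tail bound for the stochastic term.

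\textbf{A margin/variance inequality.} The core technical step is
\[
\Var_{\P_{x}}\!\bigl(f_{\BCE}-f_{\BCE^{*}}\bigr)\leq c_{0}\,R(\BCE)^{\alpha/(1+\alpha)},\qquad\BCE\in\mathfrak S,
\]
which I would derive from the backward dynamic programming identity
\[
R(\BCE)=\sum_{k=1}^{K-1}\E_{x}\!\left[\mathbf 1_{\{\tau_{1}(\BCE)\wedge\tau_{1}(\BCE^{*})\geq k\}}\bigl|G_{k}(X_{k})-C_{k}(X_{k})\bigr|\,\mathbf 1_{\CE_{k}\triangle\CE_{k}^{*}}(X_{k})\right]
\]
with $C_{k}(x):=\E[V^{*}_{k+1}(X_{k+1})\mid X_{k}=x]$. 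Uniform boundedness of $G_{k}$ gives $\|f_{\BCE}-f_{\BCE^{*}}\|_{L^{2}}^{2}\lesssim\sum_{k}\P_{x}(X_{k}\in\CE_{k}\triangle\CE_{k}^{*})$; splitting the integrand in the telescoping sum at the threshold $t=R(\BCE)^{1/(1+\alpha)}$ and applying the low-noise bound \eqref{BA} on $\{|G_{k}-C_{k}|<t\}$ yields $\sum_{k}\P_{x}(X_{k}\in\CE_{k}\triangle\CE_{k}^{*})\lesssim R(\BCE)^{\alpha/(1+\alpha)}$, which is the stated inequality.

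\textbf{Entropy transfer, peeling and concentration.} Boundedness of $G_{k}$ yields $\|f_{\BCE}-f_{\BCE'}\|_{L^{2}(\P_{x})}^{2}\leq c\,d_{X}(\BCE,\BCE')$, so \eqref{CA} translates into $\log N_{[\,]}(\epsilon,\{f_{\BCE}\},L^{2}(\P_{x}))\lesssim\epsilon^{-2\rho}$. Put $r_{M}:=(U/M)^{(1+\alpha)/(2+\alpha(1+\rho))}$ and slice $\mathfrak S$ into shells $\mathfrak S_{j}:=\{\BCE:R(\BCE)\leq 2^{j}r_{M}\}$. On $\mathfrak S_{j}$ the localised class $\{f_{\bar\BCE}-f_{\BCE}:\BCE\in\mathfrak S_{j}\}$ has $L^{2}$-radius $\sigma_{j}\lesssim(2^{j}r_{M})^{\alpha/(2(1+\alpha))}$, uniformly bounded envelope, and bracketing integral $\int_{0}^{\sigma_{j}}\epsilon^{-\rho}d\epsilon\lesssim\sigma_{j}^{1-\rho}$ (the boundary case $\rho=1$ produces only a logarithmic correction absorbed into constants). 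Solving the balance $M^{-1/2}\sigma_{j}^{1-\rho}\asymp r_{M}$ recovers exactly the exponent $(1+\alpha)/(2+\alpha(1+\rho))$. Applying Talagrand's/Bousquet's concentration inequality to each shell and summing the geometric series over $j\geq 0$ yields the subexponential bound \eqref{UBounds}; the $\sqrt U$ in the exponent arises from balancing the variance term $\sqrt{vt/M}$ in Talagrand's inequality against the target deviation $2^{j}r_{M}$.

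\textbf{Main obstacle.} The real work is the variance/margin inequality: the telescoping identity for $R(\BCE)$ must be derived carefully because distinct $\BCE\in\mathfrak S$ induce distinct stopping times and thus reach each time slice on different events, and one must verify that the $L^{2}$-to-$d_{X}$ entropy transfer preserves the exponent through the borderline case $\rho=1$. With that in hand, the peeling and concentration steps are a routine instance of the Massart--N\'ed\'elec/Koltchinskii framework for fast rates in empirical risk minimisation.
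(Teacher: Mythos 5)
Your proposal is correct and follows essentially the same route as the paper: the basic ERM inequality, the variance-to-excess-risk (margin) link obtained from the telescoping dynamic-programming identity combined with condition \eqref{BA} (the paper's Lemma~\ref{BI} and Corollary~\ref{DDX}), and the entropy transfer $d_{X}\to L^{2}$ bracketing with exponent $2\rho$ (Corollary~\ref{DFX}). The only cosmetic difference is that where you peel into shells and apply Talagrand's inequality, the paper invokes the prepackaged ratio-type increment bounds of van de Geer's Theorem 5.11, and it handles the restriction of the margin inequality to small $\Delta_{X}$ via an explicit auxiliary event rather than absorbing it into the large-$j$ shells.
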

\begin{rem}
\label{NApp}
Without condition \eqref{AppQ} the inequality \eqref{UBounds} continues to hold with \( V^{*} \) replaced by
\( \bar V \), the best approximation
of \( V^{*} \) within the class of stopping regions \( \mathfrak{S}. \)
\end{rem}
\begin{rem}
The requirement that functions \( G_{k} \) are uniformly bounded can be replaced by the existence
of all moments of \( G_{k}(X_{k}),\, k=1,\ldots,K-1, \) under \( \P. \)  In this case on can reformulate Theorem~\ref{EINEQ} using
generalized entropy with bracketing instead of usual entropy with bracketing (see Chapter 5.4 in \citet{V}).
\end{rem}
The above convergence rates can not be in general improved as shown in the next theorem.
\begin{prop}
\label{LB}
Consider the problem \eqref{OSP} with \( k=1 \) and two possible stopping dates, i.e. \( \tau \in \{ 1,2 \} \). Fix a pair of non-zero functions \( G_{1},G_{2} \) such that
\( G_{2}: \mathbb{R}^{d}\to \{ 0,1 \} \) and \( 0<G_{1}(x)<1 \) on \( [0,1]^{d}. \)
Fix some \( \gamma>0 \) and \( \alpha>0 \) and let \( \mathcal{P}_{\alpha,\gamma } \) be a class of pricing  measures
such that the
condition \eqref{BA} is fulfilled  and for any \( \P\in \mathcal{P}_{\alpha,\gamma } \) the corresponding stopping set \( \BCE^{*}_{\P}\) is in \(\mathfrak{S}_{\gamma}. \)
Then there exist a subset \( \mathcal{P}\) of \( \mathcal{P}_{\alpha, \gamma } \)
and a constant \( B>0 \) such that for any \( M\geq 1 \), any
stopping time
\( \tau_{M} \in \{ 1,2 \}\)
measurable w.r.t. \( \mathcal{F}^{\otimes M} \)
\begin{eqnarray*}
 \sup_{\P\in \mathcal{P}}\left\{ \sup_{\tau\in\{ 1,2 \}}\E_{\P}[G_{\tau}(X_\tau)]-\E_{\P^{\otimes M}}[\E_{\P}G_{\tau_{M}}(X_{\tau_{M}})] \right\}\geq B
 M^{-\frac{1+\alpha }{2+\alpha(1+(d-1)/\gamma )}}.
\end{eqnarray*}
\end{prop}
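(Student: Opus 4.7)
The plan is to reduce the problem to a minimax lower bound for binary classification and then apply Assouad's lemma on a family of Hölder-perturbed hypotheses, in the spirit of the Mammen--Tsybakov lower bounds for smooth decision boundaries under a margin condition. With $K=2$, any $\mathcal{F}^{\otimes M}$-measurable rule $\tau_M\in\{1,2\}$ is determined by a data-dependent set $\widehat{S}\subseteq E$ via $\tau_M=1\Leftrightarrow X_1\in\widehat{S}$, and the regret decomposes as
\begin{equation*}
\sup_{\tau\in\{1,2\}}\E_\P[G_\tau(X_\tau)]-\E_\P[G_{\tau_M}(X_{\tau_M})]=\int_{\widehat{S}\triangle S^{*}_{\P}}\bigl|G_1(x)-\eta_\P(x)\bigr|\,d\P_{X_1}(x),
\end{equation*}
with $\eta_\P(x):=\E_\P[G_2(X_2)\mid X_1=x]$ and $S^{*}_\P:=\{G_1\geq\eta_\P\}$. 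Up to the factor $|G_1-\eta_\P|$ this is the excess classification risk of $\widehat{S}$, with Bayes set constrained to $\mathfrak{S}_\gamma$.

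For the hypotheses I would fix $X_1$ uniform on $[0,1]^d$ under every $\P\in\mathcal{P}$, take a base boundary $b_0\equiv 1/2$, and choose a bump $\phi\in\Sigma(\gamma,H/2)$ with compact support in the unit cube of $\mathbb{R}^{d-1}$ and $0\leq\phi\leq 1$. For $h\in(0,1)$ to be tuned, partition $[0,1]^{d-1}$ into $N\asymp h^{-(d-1)}$ cubes with centers $x'_j$ and for each $\omega\in\{0,1\}^N$ put
\begin{equation*}
b_\omega(x'):=\tfrac12+\varepsilon\sum_{j=1}^{N}\omega_j\,\phi\!\left(\frac{x'-x'_j}{h}\right),\qquad \varepsilon:=c_0\,h^{\gamma},
\end{equation*}
with $c_0$ small enough that $b_\omega\in\Sigma(\gamma,H)$. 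I would then define $\P_\omega$ by choosing the conditional law of $X_2$ given $X_1=(x',x_d)$ (a two-point mixture supported on $\{G_2=0\}\cup\{G_2=1\}$) so that
\begin{equation*}
\eta_\omega(x)-G_1(x)=c_1\,\mathrm{sign}(x_d-b_\omega(x'))\,\bigl[|x_d-b_\omega(x')|\wedge\varepsilon\bigr]^{1/\alpha}.
\end{equation*}
This yields $S^{*}_{\P_\omega}=\{x_d\leq b_\omega(x')\}\in\mathfrak{S}_\gamma$, and the change of variables $\delta\mapsto\delta^{1/\alpha}$ delivers the margin condition $\P_\omega(|G_1-\eta_\omega|(X_1)<\delta)\leq A_0\delta^\alpha$ with constants uniform in $\omega$, so each $\P_\omega$ lies in $\mathcal{P}_{\alpha,\gamma}$.

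When $\omega,\omega'$ differ only at coordinate $j$, the two laws agree outside the $j$-th bump, and a direct computation using the signed-power form of $\eta_\omega$ gives
\begin{equation*}
\mathrm{KL}(\P_\omega^{\otimes M}\|\P_{\omega'}^{\otimes M})\lesssim M\,h^{d-1}\varepsilon^{1+2/\alpha},\qquad \int_{S^{*}_{\omega}\triangle S^{*}_{\omega'}}|G_1-\eta_\omega|\,d\P_{\omega,X_1}\gtrsim h^{d-1}\varepsilon^{1+1/\alpha}.
\end{equation*}
Assouad's lemma then bounds the minimax regret from below by a constant multiple of $N\,h^{d-1}\varepsilon^{1+1/\alpha}=\varepsilon^{1+1/\alpha}$, provided $M\,h^{d-1}\varepsilon^{1+2/\alpha}\asymp 1$. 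Substituting $\varepsilon=c_0 h^\gamma$ and solving for $h$ yields $\varepsilon^{(1+\alpha)/\alpha}\asymp M^{-(1+\alpha)/(2+\alpha(1+(d-1)/\gamma))}$, which is the desired rate.

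The main obstacle is to assemble the family $\{\P_\omega\}$ so that every member simultaneously (i) is a genuine two-step Markov law compatible with the fixed $G_2\in\{0,1\}$ and $0<G_1<1$, (ii) exactly realises the prescribed signed-power profile for $\eta_\omega-G_1$, and (iii) keeps uniform constants in both the Hölder class $\Sigma(\gamma,H)$ and in \eqref{BA}. The two-point mixture above is the natural candidate, but pinning down $c_0,c_1$ so that the achieved margin exponent is not strictly larger than $\alpha$—otherwise the lower bound would be weaker than claimed—requires some care. Once this calibration is in place, the KL estimate, the per-bump regret bound, and the application of Assouad are routine.
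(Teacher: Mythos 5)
Your overall strategy---reduce to a classification problem with a margin condition, build a hypercube of H\"older perturbations of a base boundary with bumps of width $h$ and height $\varepsilon\asymp h^{\gamma}$, and finish with an information-theoretic testing bound---is essentially the paper's strategy. The paper works out the case $d=2$, uses Birg\'e's/Huber's lemma over the whole hypercube $\{0,1\}^{m}$ instead of Assouad (either device works here), and its per-bump regret $\asymp h^{d-1}\varepsilon^{1+1/\alpha}$ and per-bump Kullback--Leibler contribution $\asymp h^{d-1}\varepsilon^{1+2/\alpha}$ coincide with yours; your rate arithmetic leading to $M^{-(1+\alpha)/(2+\alpha(1+(d-1)/\gamma))}$ is correct.

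The genuine gap is in the construction of $\eta_{\omega}$, and it is not merely a calibration of $c_{0},c_{1}$. With $\eta_{\omega}-G_{1}=c_{1}\,\mathrm{sign}(x_{d}-b_{\omega})\bigl[|x_{d}-b_{\omega}|\wedge\varepsilon\bigr]^{1/\alpha}$ you have $|G_{1}-\eta_{\omega}|\leq c_{1}\varepsilon^{1/\alpha}$ \emph{everywhere}, so for any fixed $\delta$ with $c_{1}\varepsilon^{1/\alpha}<\delta<\delta_{0}$ one gets $\P_{\omega}(|G_{1}-\eta_{\omega}|<\delta)=1$; hence \eqref{BA} cannot hold with constants $A_{0},\delta_{0}$ uniform over the family once $M$ is large, and your hypotheses leave $\mathcal{P}_{\alpha,\gamma}$. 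The obvious repairs backfire: if you remove the cap (or raise it to a fixed constant) to restore the margin condition, the laws $\P_{\omega}$ and $\P_{\omega^{(j)}}$ no longer agree outside the $j$-th bump---at distance $t\gg\varepsilon$ from the boundary their conditional probabilities differ by $\asymp\varepsilon t^{1/\alpha-1}$---and the per-coordinate KL picks up a far-field term $\asymp h^{d-1}\varepsilon^{2}$, which for $\alpha<2$ (in particular for the central case $\alpha=1$) dominates $h^{d-1}\varepsilon^{1+2/\alpha}$ and spoils the claimed rate. The paper's resolution is to abandon the pointwise power profile altogether: the perturbation is confined to a thin strip of height $\asymp\varepsilon$ above the base boundary, $|G_{1}-\eta_{\omega}|$ is taken \emph{constant} equal to $\asymp\varepsilon^{1/\alpha}$ on that strip and all hypotheses coincide (with $\eta_{\omega}=G_{1}$) off the strip, so the KL stays at $h^{d-1}\varepsilon^{1+2/\alpha}$, while the exponent $\alpha$ in \eqref{BA} is obtained from the global relation $(\text{strip height})\asymp(\text{gap size})^{\alpha}$ rather than from the local behaviour of $\eta_{\omega}$ near the boundary (the margin bound being verified on the set where the gap is nonzero). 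With that replacement of your signed-power profile, your Assouad argument goes through and yields the stated bound.
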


\paragraph{Discussion} It follows from Theorem~\ref{UP} that
\begin{eqnarray*}
    V^{*}- V_{M} =O_{\P}\left(M^{-\frac{1+\alpha }{2+\alpha(1+\rho)}}\right)=o_{\P}(M^{-1/2})
\end{eqnarray*}
as long as \( \alpha >0. \)  Using the decomposition
\begin{eqnarray*}
    V^{*}- V_{M,N} =V^{*}-V_{M} +
     V_{M} -V_{M,N }
\end{eqnarray*}
and the fact that \( V_{M} -V_{M,N }=O_{\P}(1/\sqrt{N}) \) for any \( M>0 \), we conclude that
\begin{eqnarray*}
  V^{*}- V_{M,N}= O_{\P}\left(M^{-\frac{1+\alpha }{2+\alpha(1+\rho)}}+N^{-\frac{1}{2}}\right).
\end{eqnarray*}
Hence, given \( N \), a reasonable choice
of \( M \), the number of Monte Carlo paths used in the optimization step, can be defined as  \( M\asymp N^{\frac{2+\alpha(1+\rho)}{2(1+\alpha)}}.\) In the case when there exists a parametric family of stopping regions
satisfying \eqref{AppQ} (see Section~\ref{app_sec} for some examples), one gets
\begin{eqnarray}
\label{MPar}
M\asymp N^{\frac{2+\alpha}{2(1+\alpha)}}
\end{eqnarray}
since any parametric family of
stopping regions with finite dimensional parameter set  fulfills \eqref{CA} for arbitrary small \( \rho>0.  \)
Let us also make a few remarks on the condition \eqref{BA} and the parameter \( \alpha \). If each function
\(G_{k}(x)-\E_{k,x}[V^{*}_{k+1}(X_{k+1})], \, k=1,\ldots, K-1, \) has a non-vanishing Jacobian in the vicinity of the stopping
boundary \( \partial \CE_{k} \) and \( X_{k} \) has continuous distribution, then  \eqref{BA} is fulfilled with \( \alpha=1. \) In fact, it is not difficult to construct examples showing that the parameter \( \alpha \) can take any value from \( \mathbb{R}_{+} \). If \( \alpha=1 \) (the most common case) \eqref{MPar} simplifies to \( M\asymp N^{3/4} \), the choice supported by our numerical example.
\par
Finally,  we would like to mention an interesting methodological connection between our analysis and  the
analysis of statistical discrimination problem performed in  \citet{MT} (see also \citet{DGL}). In particular, we need similar results form the theory of empirical processes and the condition \eqref{BA} formally resembles the so called ``margin''
condition often encountered in the literature on discrimination analysis.

\section{Applications}
\label{app_sec}
In this section we illustrate our theoretical results by some financial applications.
Namely, we consider the problem of pricing Bermudan options.
The pricing of American-style options is one of the most challenging problems in
computational finance, particularly when more than one factor affects the option values.
Simulation based methods have become increasingly attractive
 compared to
other numerical methods as the dimension of the problem increases. The reason for this is
that the convergence rates of simulation based methods are generally independent of the number
of state variables.
In the context of our paper we consider the so called parametric approximation algorithms
\cite[see][Section 8.2]{Gl}.
In essence, these algorithms represent the optimal stopping sets \( \mathcal{S}^{*}_{k} \) by a finite numbers of parameters and
then find the Bermudan option price by maximizing, over the parameter space, a Monte Carlo approximation of the corresponding value function.
The important question here is wether on can parametrize the optimal stopping region \( \BCE^{*} \) by a finite dimensional set of parameters, i.e. \( \BCE^{*}=\BCE(\theta), \, \theta \in \Theta, \) where \( \Theta  \) is a compact finite dimensional set. It turns out that that
this is possible in many situations (see \citet{G}). The assumption \eqref{CA} and \eqref{AppQ} are then automatically fulfilled with arbitrary small  \( \rho>0. \)
\subsection{Numerical example: Bermudan max call}
This is a benchmark example studied in \citet{BG} and \citet{Gl}
among others. Specifically, the model with $d$ identically distributed
assets is considered, where each underlying has dividend yield $\delta $.
The risk-neutral dynamic of the asset \( X(t)=(X^{1}(t),\ldots,X^{d}(t)) \) is given by
\begin{equation*}
\frac{dX^{l}(t)}{X^{l}(t)}=(r-\delta )dt+\sigma dW^{l}(t),\quad X^{l}(0)=x_{0}, \quad l=1,...,d,
\end{equation*}%
where $W^{l}(t),\,l=1,...,d$, are independent one-dimensional Brownian
motions and $x_{0}, r,\delta ,\sigma $ are constants. At any time $t\in
\{t_{1},...,t_{K}\}$ the holder of the option may exercise it and
receive the payoff
\begin{equation*}
G_{k}(X_{k}):=\left(\max \left(X^{1}_{k},...,X^{d}_{k}\right)-\kappa\right)^{+},
\end{equation*}%
where \( X_{k}:=X(t_{k}) \) for \( k=1, \ldots, K. \)
We take \( d=2 \), \( r=5\% \), \( \delta=10\% \), \( \sigma=0.2 \), \( \kappa=100 \), \( x_{0}=90 \) and $t_{k}=kT/K,\,k=1,\ldots, K$, with $T=3,\,K
=9$ as in \citet[Chapter 8]{Gl}.
\par
To describe the optimal early exercise region at date \( t_{k},\, k=1,\ldots, K, \) one can divide \( \mathbb{R}^{2} \) into three different connected sets: one exercise region and two continuation regions (see \citet{BD} for more details).
All these regions can be parameterized by using two functions depending on two dimensional parameter
\( \theta_{k}\in \mathbb{R}^{2}. \)  Making use of this characterization, we define a parametric family of stopping regions as in \citet{G} via
\begin{eqnarray*}
    \CE_{k}(\theta_{k}):=\{(x_{1},x_{2}): \max(\max(x_{1},x_{2})-K,0)>\theta^{1}_{k};\, |x_{1}-x_{2}|>\theta^{2}_{k} \},
\end{eqnarray*}
where \( \theta_{k}\in \Theta, \,k=1,\ldots, K\) and \( \Theta  \) is a compact subset of \( \mathbb{R}^{2}. \)
Furthermore, we simplify the corresponding optimization problem by setting \( \theta_{1}=\ldots=\theta_{K}.\)
This will introduce an additional bias and hence may increase the left hand side of \eqref{AppQ} (see  Remark~\ref{NApp}). However, this bias turns out to be rather small in practice.
In order to implement and analyze the simulation based optimization based algorithm in this situation,  we  perform the following steps:
\begin{itemize}
  \item Simulate \( L \) independent sets of trajectories of the process \( (X_{k}) \) each of the size \( M \):
  \begin{eqnarray*}
    (X^{(l,m)}_{1},\ldots,X^{(l,m)}_{K}),\quad m=1,\ldots,M,
  \end{eqnarray*}
  where \( l=1,\ldots, L. \)
  \item Compute estimates \( \theta_{M}^{(1)},\ldots, \theta_{M}^{(L)} \) via
\begin{eqnarray*}
 \theta_{M}^{(l)}:=\arg\max_{\theta\in
\Theta}\left\{ \frac{1}{M}\sum_{m=1}^{M}G_{\tau_{1}(\BCE(\theta))}
\left(X^{(l,m)}_{\tau_{1}(\BCE(\theta))}\right) \right\}.
\end{eqnarray*}
\item Simulate a new set of trajectories of size \( N \) independent
of \( (X^{(l,m)}_{k}): \)
\[
(\widetilde X^{(n)}_{1},\ldots, \widetilde X^{(n)}_{K}), \quad  n=1,\ldots,N.
\]
\item Compute \( L \) estimates for the optimal value function \( V^{*}_{1} \) as follows
\begin{eqnarray*}
V^{(l)}_{M,N}:=\frac{1}{N}\sum_{n=1}^{N}G_{\tau^{(l,n)}_{M}}
\left(\widetilde X^{(n)}_{\tau^{(l,n)}_{M}}\right), \quad l=1,\ldots, L,
\end{eqnarray*}
with
\begin{eqnarray*}
    \tau^{(l,n)}_{M}:=\min\left\{ 1\leq k \leq K: \widetilde X^{(n)}_{k}\in \mathcal{S}_{k}\left(\theta^{(l)}_{M}\right) \right\}, \quad n=1,\ldots,N.
\end{eqnarray*}
 Denote by \( \sigma_{M,N,l}  \) the standard deviation computed from the sample \( (G_{\tau^{(l,n)}_{M}}, \, n=1,\ldots, N)\)
 and set \( \sigma_{M,N}=\min_{l} \sigma_{M,N,l}.  \)
 \item Compute
\begin{eqnarray*}
    \mu_{M,N,L}:=\frac{1}{L}\sum_{l=1}^{L}V^{(l)}_{M,N}, \quad \vartheta_{M,N,L}:=\sqrt{\frac{1}{L-1}\sum_{l=1}^{L}\left(V^{(l)}_{M,N}-\mu _{M,N,L}\right)^{2}}.
\end{eqnarray*}
\end{itemize}
By the law of large numbers
\begin{eqnarray}
\label{mu_conv}
\mu_{M,N,L}&\stackrel{\P}{\to}& \E_{\P^{\otimes M}} \left[ V_{M,N}\right],\quad L\to \infty,
\\
\label{vartheta_conv}
\vartheta_{M,N,L}&\stackrel{\P}{\to}& \Var_{\P^{\otimes M}} \left[V_{M,N} \right], \quad L\to \infty,
\end{eqnarray}
where
\begin{eqnarray*}
     V_{M,N}:= \frac{1}{N}\sum_{n=1}^{N}G_{\tau^{(n)}_{M}}
\left(\widetilde X^{(n)}_{\tau^{(n)}_{M}}\right).
\end{eqnarray*}
The difference
\( \bar V- V_{M,N} \)
with
\begin{eqnarray*}
\bar V:=\max _{\theta\in \Theta}\E [G_{\tau_{1}(\BCE(\theta))}(X_{\tau_{1}(\BCE(\theta))})]
\end{eqnarray*}
can be decomposed into the sum of three terms
\begin{eqnarray}
   \label{decomp_NE}
   (\bar V-\E_{\P^{\otimes M}}  \left[ V_{M} \right])+ (\E_{\P^{\otimes M}}  \left[ V_{M} \right]-V_{M})+ V_{M}-V_{M,N}.
   \end{eqnarray}
 The first term in \eqref{decomp_NE} is deterministic and can be approximated by
 \( Q_{1}(M):=\mu_{M^{*},N^{*},L^{*}} - \mu_{M,N^{*},L^{*}}\) with large enough \( L^{*} \), \( M^{*} \) and \( N^{*}. \) The variability of the second, zero mean, stochastic term   can be measured by   \(\sqrt{\Var_{\P^{\otimes M}} \left[V_{M} \right]} \) which in turn can be estimated by \(  Q_{2}(M):=\sqrt{\vartheta_{M,N^{*},L^{*}}} \), due to \eqref{vartheta_conv}.   The standard deviation of  \(V_{M}-V_{M,N}\) for any \( M \) can be approximated by \( Q_{3}(N)=\sigma_{M^{*},N} /\sqrt{N} \).
In our simulation study we take \( N^{*}=1000000, \, L^{*}=500,\, M^{*}=10000 \)  and obtain \( \bar V\approx \mu_{M^{*},N^{*},L^{*}}=7.96 \) (note that \(  V^{*}=8.07 \)  according to
\citet{Gl}). In the left-hand side of Figure~\ref{Relation_MN} we plot both quantities \(  Q_{1}(M) \) and
\( Q_{2}(M) \) as functions of \( M. \) Note that \( Q_{2}(M) \) dominates \(  Q_{1}(M) \), especially for large \( M. \)
Hence, by comparing \( Q_{2}(M) \) with \( Q_{3}(N) \) and approximately solving the equation \( Q_{2}(M)=Q_{3}(N) \) in \( N \), one can infer on the optimal relation between \( M \) and \( N \).
In Figure~\ref{Relation_MN} (on the right-hand side) the resulting empirical relation is depicted by crosses. Additionally,  we plotted two benchmark curves \( N=M^{4/3} \) and \( N=M^{4.5/3} \).
As one can see the choice \( M=N^{3/4} \) is likely to be sufficient in this situation since it always leads to the inequality \( Q_{1}(M)+\sigma Q_{2}(M)\leq  \sigma Q_{3}(N)  \) for any \( \sigma>1. \) As a consequence, for  \( M=N^{3/4} \) and any \( N \), \( \bar V \)
lies with high probability in the interval \( [\mu_{M,N,L^{*}}-\sigma Q_{3}(N),\mu_{M,N,L^{*}}+\sigma Q_{3}(N)],\) provided that \( \sigma \) is large enough.
\begin{figure}[pth]
\centering \includegraphics[width=14cm]{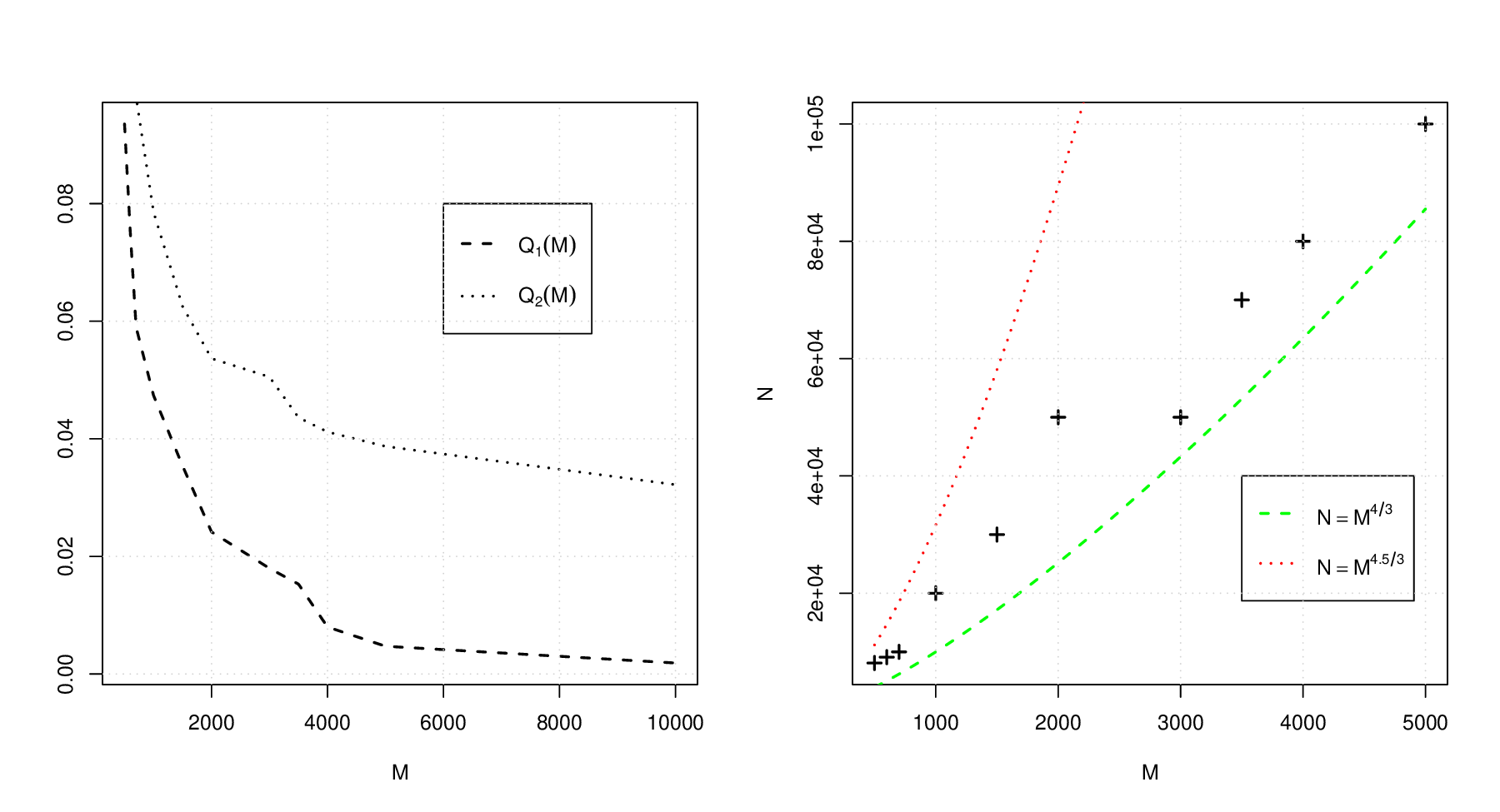}
\caption{ Left: functions \( Q_{1}(M) \) and \( Q_{2}(M) \); Right: optimal empirical relationship between \( M \) and \( N \) (crosses) together with benchmark curves \( N=M^{4/3} \) (dashed line) and \( N=M^{4.5/3} \) (dotted line). }%
\label{Relation_MN}%
\end{figure}

\section{Proof of main results}
\subsection{Proof of Theorem~\ref{UP}}
Define
\begin{eqnarray*}
    \Delta_{M}(\BCE)&:=&\sqrt{M}\sum_{m=1}^{M}\left\{ G_{\tau_{1}(\BCE)}\left(X^{(m)}_{\tau_{1}(\BCE)}\right)
    -\E \left[ G_{\tau_{1}(\BCE)}\left(X_{\tau_{1}(\BCE)}\right) \right] \right\}
\end{eqnarray*}
and \( \Delta_{M}(\BCE',\BCE):=\Delta_{M}(\BCE')-\Delta_{M}(\BCE) \) for any \( \BCE',\BCE\in \mathfrak{S}. \)
Since
\[
\frac{1}{M}\sum_{m=1}^{M}G_{\tau_{1}(\bar\BCE)}\left(X^{(m)}_{\tau_{1}(\bar\BCE)}\right) \leq
\frac{1}{M}\sum_{m=1}^{M}G_{\tau_{1}(\BCE_{M})}\left(X^{(m)}_{\tau_{1}(\BCE_{M})}\right)
\]
with probability \( 1 \), it holds
\begin{eqnarray}
\label{PUB1}
\Delta(\BCE_{M})&\leq &\Delta(\bar \BCE)+\frac{\left[ \Delta_{M}(\BCE^{*},\bar \BCE)
+\Delta_{M}(\BCE_{M},\BCE^{*}) \right]}{\sqrt{M}}
\end{eqnarray}
with \( \Delta(\BCE):=\E[G_{\tau^{*}_{1}}(X_{\tau^{*}_{1}})]-\E[G_{\tau_{1}(\BCE)}(X_{\tau_{1}(\BCE)})]. \)
Set  \( \varepsilon_{M}=M^{-1/2(1+\rho)} \) then
\begin{eqnarray*}
\Delta(\BCE_{M})&\leq& \Delta(\bar \BCE)+
\frac{2}{\sqrt{M}}\sup_{\BCE\in \mathfrak{S}:\, \Delta_{G}(\BCE^{*},\BCE)\leq \varepsilon_{M} }
|\Delta_{M}(\BCE^{*},\BCE)|
\\
&&+2\times\frac{\Delta_{G}^{(1-\rho)}(\BCE^{*},\BCE_{M})}{\sqrt{M}}\times
\sup_{\BCE\in \mathfrak{S}:\, \Delta_{G}(\BCE^{*},\BCE)>\varepsilon_{M} }
\left[ \frac{|\Delta_{M}(\BCE^{*},\BCE)|}
{\Delta_{G}^{(1-\rho)}(\BCE^{*},\BCE)} \right].
\end{eqnarray*}
Define
\begin{eqnarray*}
\mathcal{W}_{1,M}:=\sup_{\BCE\in \mathfrak{S}:\, \Delta_{G}(\BCE^{*},\BCE)\leq \varepsilon_{M} }
|\Delta_{M}(\BCE^{*},\BCE)|,
\\
\mathcal{W}_{2,M}:=\sup_{\BCE\in \mathfrak{S}:\, \Delta_{G}(\BCE^{*},\BCE)>\varepsilon_{M} }
\frac{|\Delta_{M}(\BCE^{*},\BCE)|}
{\Delta_{G}^{(1-\rho)}(\BCE^{*},\BCE)}
\end{eqnarray*}
and set \( \mathcal{A}_{0}:=\{ \mathcal{W}_{1,M}\leq U\varepsilon_{M} ^{1-\rho}  \} \)
for \( U>U_{0}. \)
Note that under assumption \eqref{CA} the condition \eqref{CA1} of Theorem~\ref{EINEQ} is fulfilled with \( \nu = 2\rho  \)
due to Corollary~\ref{DFX}. Hence Theorem~\ref{EINEQ} yields \( \P(\bar{\mathcal{A}}_{0})\leq  C\exp(-U\varepsilon_{M} ^{-2\rho }/C^{2} ).\)
Denote
\begin{eqnarray*}
\Delta_{G}(\BCE,\BCE'):=\left\{ \E\left[G_{\tau_{1}(\BCE)}
\left(X_{\tau_{1}(\BCE)}\right)-G_{\tau_{1}(\BCE')}
\left(X_{\tau_{1}(\BCE')}\right)\right]^{2} \right\}^{1/2}
\end{eqnarray*}
for any \( \BCE,\BCE' \in \mathfrak{B}.\)
Since \( \Delta(\bar \BCE)\leq D M^{-1/(1+\rho )}   \) and \( \varepsilon^{1-\rho}_{M}/\sqrt{M}=M^{-1/(1+\rho)} \),   we get on \( \mathcal{A}_{0} \)
\begin{eqnarray*}
\Delta(\BCE_{M})&\leq&  C_{0}M^{-1/(1+\rho )}+2\times\frac{\Delta_{G}^{(1-\rho)}(\BCE^{*},\BCE_{M})}{\sqrt{M}}\mathcal{W}_{2,M}
\end{eqnarray*}
with  \( C_{0}=D+2U \).
Combining  Corollary~\ref{DFX} with Corollary~\ref{DDX} leads to the inequality
\begin{eqnarray*}
\Delta_{G}(\BCE^{*},\BCE_{M})\leq 2\sqrt{2}A_{G}v^{-\alpha/2(1+\alpha )}_{\alpha }\Delta^{\alpha/2(1+\alpha)}(\BCE_{M})
\end{eqnarray*}
which holds on the set \( \mathcal{A}_{1}:=\{ \Delta_{X}(\BCE^{*},\BCE_{M})\leq \delta_{\alpha } \}, \)
where \( \delta_{\alpha }  \) and \( v_{\alpha } \) are defined in Corollary~\ref{DDX}.
Denote
\begin{eqnarray*}
\mathcal{A}_{2}:=\left\{ \Delta(\BCE_{M})>C_{0}(1+\varkappa )M^{-1/(1+\rho)} \right\}
\end{eqnarray*}
with some \( \varkappa  >0. \) It then holds on \( \mathcal{A}_{0}\cap\mathcal{A}_{1}\cap\mathcal{A}_{2} \)
\begin{eqnarray*}
    \Delta(\BCE_{M})\leq 2\frac{ \Delta^{\alpha(1-\rho)/(2(1+\alpha))}(\BCE_{M})}{\varkappa \sqrt{M}}\mathcal{W}_{2,M}
\end{eqnarray*}
and therefore
\begin{eqnarray*}
    \Delta(\BCE_{M})\leq (\varkappa/2)^{-\nu} M^{-\nu/2}\mathcal{W}^{\nu}_{2,M}
\end{eqnarray*}
with \( \nu=\frac{2(1+\alpha) }{2+\alpha (1+\rho )}. \)
Let us now estimate \( \P(\bar{\mathcal{A}}_{1}). \)
Using Corollary~\ref{DDX}, we get
\begin{multline*}
    \P_{x}^{\otimes M}(\Delta_{X}(\BCE^{*},\BCE_{M})>\delta_{\alpha } )\leq
    \\
    \P_{x}^{\otimes M}\left( \left( \frac{2^{1/\alpha }}{\delta_{0}} \right)\Delta(\BCE_{M})+\frac{\delta_{\alpha}}{2(1+\alpha)}>\delta_{\alpha }  \right)
    \\
    =\P_{x}^{\otimes M}(\Delta(\BCE_{M})>c_{\alpha})
\end{multline*}
with \( c_{\alpha}=\delta_{0}\delta_{\alpha}2^{-1/\alpha}\left( 1-\frac{1}{2(1+\alpha )} \right).  \)
Furthermore, due to \eqref{PUB1}
\begin{eqnarray*}
     \P_{x}^{\otimes M}(\Delta(\BCE_{M})>c_{\alpha})&\leq & \P_{x}^{\otimes M}\left(DM^{-1/(1+\rho)}+2M^{-1/2}\sup_{\BCE\in \mathfrak{S}}|\Delta_{M}(\BCE)|>c_{\alpha}\right)
     \\
     &\leq &
      \P^{\otimes M}_{x}\left(\sup_{\BCE\in \mathfrak{S}}|\Delta_{M}(\BCE)|>c_{\alpha}\sqrt{M}/4\right)
\end{eqnarray*}
for large enough \( M. \) Theorem~\ref{EINEQ} implies
\begin{eqnarray*}
     \P^{\otimes M}_{x}\left(\sup_{\BCE\in \mathfrak{S}}|\Delta_{M}(\BCE)|>c_{\alpha}\sqrt{M}/4\right)\leq B_{1}\exp(-MB_{2})
\end{eqnarray*}
with some constants \( B_{1}>0 \) and  \( B_{2}=B_{2}(\alpha)>0. \)
Applying Theorem~\ref{EINEQ} to \( \mathcal{W}^{\nu}_{2,M} \) and using the fact that \( \nu/2\leq 1/(1+\rho)  \) for all \( 0<\rho\leq 1, \)
 we  finally obtain the inequality
\begin{eqnarray*}
    \P^{\otimes M}_{x}\left(\Delta(\BCE_{M})>(V/M)^{\nu/2} \right)&\leq & C\exp(-\sqrt{V}/B_{3})
    \\
    &&+C\exp\left( -\frac{U\varepsilon_{M}^{-2\rho}}{C^{2}} \right)+B_{1}\exp(-MB_{2})
\end{eqnarray*}
which holds for all \( V>V_{0} \) and \( M>M_{0} \) with some constant \( B_{3} \) depending on \( \varkappa.  \)

\subsection{Proof of Proposition~\ref{LB}}
For simplicity, we give the proof only for the case $d=2$ (an extension to
higher dimensions is straightforward). In the case of  two exercise dates  the corresponding optimal stopping problem is completely specified by the
distribution of the vector $(X_{1},G_{2}(X_{2})).$ Because of a digital
structure of $G_{2}$ the distribution of $(X_{1},G_{2}(X_{2}))$ would be
completely determined if the marginal distribution of $X_{1}$ and the
probability $\P(G_{2}(X_{2})=1|X_{1}=x)$ are defined. Taking into account
this, we now construct a family of distributions for $%
(X_{1},G_{2}(X_{2}))$ indexed by elements of the set $\Omega =\{0,1\}^{m}.$
First, the marginal distribution of $X_{1}$ is supposed to be the same for
all $\omega \in \Omega $ and posseses a density $p(x)$ satisfying
\begin{equation*}
0<p_{\ast }\leq p(x)\leq p^{\ast }<\infty ,\quad x\in \lbrack 0,1]^{2}.
\end{equation*}%
Let us now construct a family of conditional distributions $\P_{\omega
}(G_{2}(X_{2})=1|X_{1}=x)$, $\omega \in \Omega .$ To this end let $%
\phi $ be an infinitely many times differentiable function on $\mathbb{R}$
with the following properties: $\phi (z)=0$ for $|z|\geq 1,$ $\phi (z)\geq 0$
for all $z$ and $\sup_{z\in \mathbb{R}}[\phi (z)]\leq 1.$ For $j=1,\ldots ,m$
put%
\begin{equation*}
\phi _{j}(z):=\delta m^{-\gamma }\phi \left( m\left[ z-\frac{2j-1}{m}\right]
\right),  \quad z\in \mathbb{R}
\end{equation*}%
with some $0<\delta <1.$ For vectors $\omega =(\omega _{1},\ldots ,\omega
_{m})$ of elements $\omega _{j}\in \{0,1\}$ and for any $z\in \mathbb{R}$
define%
\begin{equation*}
b(z,\omega ):=\sum_{j=1}^{m}\omega _{j}\phi _{j}(z).
\end{equation*}%
Put  for any $\omega \in \Omega $ and any $x\in \mathbb{R}^{2},$%
\begin{eqnarray*}
C_{\omega }(x) &:=&\P_{\omega }(G_{2}(X_{2})=1|X_{1}=x)= \\
&=&G_{1}(x)-Am^{-\gamma /\alpha }\mathbf{1}\left\{ 0\leq x_{2}\leq
b(x_{1},\omega )\right\}  \\
&&+Am^{-\gamma /\alpha }\mathbf{1}\left\{ b(x_{1},\omega )<x_{2}\leq \delta
m^{-\gamma }\right\} ,
\end{eqnarray*}%
where $A$ is a positive constant. Due to our assumptions on $G_{1}(x)$, there
are constants $0<G_{-}<G_{+}<1$ such that
\begin{equation*}
G_{-}\leq G_{1}(x)\leq G_{+}, \quad x\in [0,1]^{2}.
\end{equation*}%
Hence, the constant $A$ can be chosen in such a way that  $C_{\omega }(x)$
remains positive and strictly less than $1$ on $[0,1]^{2}$ for any \( \omega\in \Omega. \)  The
stopping set%
\begin{equation*}
\mathcal{S}_{\omega }:=\left\{ x:C_{\omega }(x)\leq G_{1}(x)\right\} =\left\{
(x_{1},x_{2}):0\leq x_{2}\leq b(x_{1},\omega )\right\}
\end{equation*}%
belongs to $\mathfrak{S}_{\gamma}$ since $b(\cdot ,\omega )\in \Sigma (\gamma ,L)$
for $\delta$ small enough. Moreover, for any $\eta>0 $
\begin{eqnarray*}
\P_{\omega }\left( |G_{1}(X_{1})-C_{\omega }(X_{1})|\leq \eta \right)
&=&\P_{\omega }( 0\leq X_{1}^{2}\leq
\delta
m^{-\gamma })\mathbf{1}(Am^{-\gamma /\alpha }\leq \eta)
\\
&\leq &\delta p^{\ast }m^{-\gamma }\mathbf{1}(Am^{-\gamma /\alpha }\leq \eta)\leq \delta p^{\ast
}A^{-\alpha }\eta ^{\alpha }
\end{eqnarray*}
and the condition \eqref{BA} is fulfilled.
Let $\tau _{M}$ be a stopping time w.r.t. $\mathcal{F}^{\otimes M}$, then
the identity (see Lemma~\ref{BI})
\begin{eqnarray*}
\E_{\P_{\omega }}[G_{\tau ^{\ast }}(X_{\tau ^{\ast }})]-\E_{\P_{\omega
}}[G_{\tau _{M}}(X_{\tau _{M}})] &=&\newline
\E_{\P_{\omega }}\left[ (G_{1}(X_{1})-G_{2}(X_{2}))\mathbf{1}(\tau ^{\ast
}=1,\tau _{M}=2)\right] \newline
\\
&&+\E_{\P_{\omega }}\left[ (G_{2}(X_{2})-G_{1}(X_{1}))\mathbf{1}(\tau ^{\ast
}=2,\tau _{M}=1)\right] \newline
\\
&=&\E_{\P_{\omega }}\left[ |G_{1}(X_{1})-\E(G_{2}(X_{2})|\mathcal{F}_{1})|%
\mathbf{1}\{\tau _{M}\neq \tau ^{\ast }\}\right]
\end{eqnarray*}%
leads to
\begin{equation*}
\E_{\P_{\omega }}[G_{\tau ^{\ast }}(X_{\tau ^{\ast }})]-\E_{\P_{\omega
}^{\otimes M}}\left\{ \E_{\P_{\omega }}[G_{\tau _{M}}(X_{\tau _{M}})]\right\}
=\E_{\P_{\omega }^{\otimes M}}\E_{\P_{\omega }}\left[ |\Delta _{\omega }(X_{1})|%
\mathbf{1}\{\tau _{M}\neq \tau ^{\ast }\}\right]
\end{equation*}%
with $\Delta _{\omega }(x):=G_{1}(x)-C_{\omega }(x)$. By conditioning on $%
X_{1}$ we get%
\begin{eqnarray*}
\E_{\P^{\otimes M}}\E_{\P_{\omega }}\left[ |\Delta _{\omega }(X_{1})|\mathbf{1}%
\{\tau _{M}\neq \tau ^{\ast }\}\right]  &=&Am^{-\gamma /\alpha }\P(0\leq
X_{1}^{2}\leq \delta m^{-\gamma })\P_{\omega }^{\otimes M}\left( \tau
_{M}\neq \tau ^{\ast }\right)  \\
&\geq &Am^{-\gamma /\alpha }p_{\ast }\delta m^{-\gamma }\P_{\omega }^{\otimes
M}\left( \tau _{M}\neq \tau ^{\ast }\right) .
\end{eqnarray*}
Using now a well known Birg\'{e}'s or Huber's lemma, \citep[see, e.g.][p. 243]{DGL}, we get
\begin{equation*}
\sup_{\omega \in \{0,1\}^{m}}\P_{\omega }^{\otimes M}(\widehat{\tau }_{M}\neq
\tau ^{\ast })\geq \left[ 0.36\wedge \left( 1-\frac{MK_{\mathcal{H}}}{\log
(\left\vert \mathcal{H}\right\vert )}\right) \right] ,
\end{equation*}%
where $K_{\mathcal{H}}:=\sup_{P,Q\in \mathcal{H}}K(P,Q),$  \( \mathcal{H}:=\{ \P_{\omega},\,\omega\in \{ 0,1 \}^{m} \} \) and $K(P,Q)$ is a
Kullback-Leibler distance between two measures $P$ and $Q$. Since for any
two measures $P$ and $Q$ from $\mathcal{H}$ with $Q\neq P$
\begin{eqnarray*}
K(P,Q) &\leq &\sup_{\substack{ \omega _{1},\omega _{2}\in \{0,1\}^{m} \\ %
\omega _{1}\neq \omega _{2}}}\E\left[ C_{\omega
_{1}}(X_{1})\log \left\{ \frac{C_{\omega _{1}}(X_{1})}{C_{\omega _{2}}(X_{1})%
}\right\} \right.  \\
&&\left. +(1-C_{\omega _{1}}(X_{1}))\log \left\{ \frac{1-C_{\omega
_{1}}(X_{1})}{1-C_{\omega _{2}}(X_{1})}\right\} \right]  \\
&\leq &(1-G_{+}-A)^{-1}(G_{-}-A)^{-1}
\\
&&\times\P(0\leq X_{1}^{2}\leq \delta m^{-\gamma
})\left[ A^{2}m^{-2\gamma /\alpha }\right]
\\
&\leq &CMm^{-\gamma -2\gamma /\alpha
-1}
\end{eqnarray*}%
with some constant \( C>0 \) for small enough $A$, and $\log (|\mathcal{H}|)=m\log (2)$, we get%
\begin{equation*}
\sup_{\omega \in \{0,1\}^{m}}\P_{\omega }^{\otimes M}(\widehat{\tau }_{M}\neq
\tau ^{\ast })\geq \left[ 0.36\wedge \left( 1-CMm^{-\gamma -2\gamma /\alpha
-1}\right) \right] \quad
\end{equation*}%
with some constant $C>0.$ Hence,%
\begin{equation*}
\sup_{\omega \in \{0,1\}^{m}}\P_{\omega }^{\otimes M}(\widehat{\tau }_{M}\neq
\tau ^{\ast })>0
\end{equation*}
provided that $m=qM^{1/(\gamma +2\gamma /\alpha +1)}$ for small enough real number $q>0$.
In this case%
\begin{multline*}
\sup_{\omega \in \{0,1\}^{m}}\left\{ \E_{\P_{\omega }}[G_{\tau ^{\ast
}}(X_{\tau ^{\ast }})]-\E_{\P_{\omega }^{\otimes M}}\left\{ \E_{\P_{\omega
}}[G_{\tau _{M}}(X_{\tau _{M}})]\right\} \right\}
\\
\geq Ap_{\ast }\delta q^{-\gamma /\alpha-\gamma }M^{-(\gamma /\alpha
+\gamma )/(\gamma +2\gamma /\alpha +1)}=BM^{-\frac{(1+\alpha )}{2+\alpha
(1+1/\gamma )}}
\end{multline*}%
with $B=Ap_{\ast }\delta q^{-\gamma /\alpha-\gamma }.$

\section{Auxiliary results }

We have
\begin{eqnarray*}
    \Delta_{M}(\BCE)&:=&\sqrt{M}\sum_{m=1}^{M}\left\{ g_{\BCE}(X^{(m)}_{1},\ldots,X^{(m)}_{K})-\E\left[  g_{\BCE}(X_{1},\ldots,X_{K}) \right] \right\}
\end{eqnarray*}
with functions \( g_{\BCE}: \underbrace{\mathbb{R}^{d}\times\ldots\times \mathbb{R}^{d}}_{K}\to \mathbb{R} \) defined as
\[
 g_{\BCE}(x_{1},\ldots,x_{K}):=\sum_{k=0}^{K-1}G_{k+1}(x_{k+1})\mathbf{1}_{\{ x_{1}\not\in \CE_{1}, \ldots, x_{k}\not\in \CE_{k}, x_{k+1}\in \CE_{k+1} \}}.
 \]
 Denote \( \mathcal{G}=\{ g_{\BCE}: \BCE\in\mathfrak{S} \}.\)  Obviously \( \mathcal{G}\)
 is a class of uniformly bounded functions provided that all functions \( G_{k} \) are uniformly bounded.
 \paragraph{Definition} Let \( \mathcal{N}_{B}(\delta, \mathcal{G},\P) \) be the
smallest value of \( n \) for which there exist pairs of functions \( \{ [g_{j}^{L},g_{j}^{U}] \}_{j=1}^{n} \)
such that \( \| g_{j}^{U}-g_{j}^{L }\|_{L_{2}(\P)}\leq \delta   \) for all \( j=1,\ldots, n, \) and such that
for each \( g\in \mathcal{G}, \) there is \( j=j(g)\in \{ 1,\ldots,n \} \) such that
\begin{eqnarray*}
    g_{j}^{L}\leq g \leq g_{j}^{U}.
\end{eqnarray*}
Then \( \mathcal{H}_{B}(\delta, \mathcal{G},\P)=\log \left[ \mathcal{N}_{B}(\delta, \mathcal{G},\P) \right] \)
is called the  entropy with bracketing of \( \mathcal{G} \).
The following theorem follows directly from Theorem 5.11 in \citet{V}.
\begin{thm}
\label{EINEQ}
Assume that there exists a constant \( A>0 \) such that
\begin{eqnarray}
\label{CA1}
\mathcal{H}_{B}(\delta,\mathcal{G},\P)\leq A\delta^{-\nu}
\end{eqnarray}
for any \( \delta>0 \) and some \( \nu>0 \), where
\( \mathcal{H}_{B}(\delta,\mathcal{G},\P) \) is the \( \delta \)-entropy with bracketing
of \( \mathcal{G}. \) Fix some \( \BCE_{0}\in \mathfrak{S} \) then for any \( \varepsilon   \geq  M^{-1/(2+\nu )}  \)
\begin{eqnarray*}
    &&\P\left( \sup_{\BCE\in \mathfrak{S},\,\|g_{\BCE}-g_{\BCE_{0}}\|_{L_{2}(\P)}\leq\varepsilon  }|\Delta_{M}(\BCE)-\Delta_{M}(\BCE_{0})|>U\varepsilon ^{1-\frac{\nu }{2}}  \right)
    \leq C \exp(-U\varepsilon ^{-\nu }/C^{2} ),
\\
    &&\P\left(  \sup_{\BCE\in \mathfrak{S},\,\|g_{\BCE}-g_{\BCE_{0}}\|_{L_{2}(\P)}\leq\varepsilon  }\frac{|\Delta_{M}(\BCE)-\Delta_{M}(\BCE_{0})|}{\|g_{\BCE}-g_{\BCE_{0}}\|_{L_{2}(\P)}^{1-\nu/2 }}>U  \right)
    \leq C \exp(-U/C^{2} ).
\end{eqnarray*}
for all \( U>C \) and \( M>M_{0}, \) where \( C \) and \( M_{0} \) are two positive constants.
Moreover, for any \( z>0 \)
\begin{eqnarray*}
    \P\left( \sup_{\BCE\in \mathfrak{S}}|\Delta_{M}(\BCE)-\Delta_{M}(\BCE_{0})|>z\sqrt{M}  \right)
    \leq C \exp(-Mz^{2}/C^{2}B )
\end{eqnarray*}
with some positive constant \( B>0 \).
\end{thm}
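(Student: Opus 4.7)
The approach is to recognize that Theorem~\ref{EINEQ} is essentially a translation of a standard Bernstein/chaining inequality for empirical processes indexed by a uniformly bounded class with polynomial bracketing entropy, which is exactly Theorem~5.11 in \citet{V}. The class \(\mathcal{G}=\{g_{\BCE}:\BCE\in\mathfrak{S}\}\) is uniformly bounded because \(|g_{\BCE}(x_{1},\ldots,x_{K})|\le \sup_{k}\|G_{k}\|_{\infty}\) for every \(\BCE\), and the bracketing condition \eqref{CA1} is assumed directly at the level of \(\mathcal{G}\); thus the hypotheses of the cited theorem are met, and the three claimed inequalities should correspond to its local, weighted, and global forms.

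For the first (local) inequality I would fix \(\BCE_{0}\) and apply Theorem~5.11 of \citet{V} to the centered class \(\{g_{\BCE}-g_{\BCE_{0}}:\|g_{\BCE}-g_{\BCE_{0}}\|_{L_{2}(\P)}\le\varepsilon\}\), whose bracketing entropy is still dominated by \(A\delta^{-\nu}\). The chaining bound is driven by the entropy integral
\[
\int_{0}^{\varepsilon}\sqrt{\mathcal{H}_{B}(u,\mathcal{G},\P)}\,du\le\sqrt{A}\int_{0}^{\varepsilon}u^{-\nu/2}\,du=\frac{2\sqrt{A}}{2-\nu}\,\varepsilon^{1-\nu/2},
\]
which produces the shift \(U\varepsilon^{1-\nu/2}\) and, combined with the variance bound \(\|g_{\BCE}-g_{\BCE_{0}}\|_{L_{2}(\P)}^{2}\le\varepsilon^{2}\), yields the sub-Gaussian tail \(C\exp(-U\varepsilon^{-\nu}/C^{2})\) once \(\varepsilon\ge M^{-1/(2+\nu)}\) (the Birgé--Massart threshold at which the chaining variance dominates the envelope contribution).

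For the second inequality I would use a standard peeling device, slicing \(\mathfrak{S}\) into shells \(\mathfrak{S}_{j}=\{\BCE:2^{-j-1}R\le\|g_{\BCE}-g_{\BCE_{0}}\|_{L_{2}(\P)}\le 2^{-j}R\}\) for \(j\ge 0\), where \(R\) is the (bounded) \(L_{2}(\P)\)-diameter of \(\mathcal{G}\). Applying the first inequality on shell \(\mathfrak{S}_{j}\) with radius \(2^{-j}R\) and dividing by \((2^{-j-1}R)^{1-\nu/2}\) bounds \(|\Delta_{M}(\BCE)-\Delta_{M}(\BCE_{0})|/\|g_{\BCE}-g_{\BCE_{0}}\|^{1-\nu/2}\) by \(U\) on that shell with probability at least \(1-C\exp(-U/C^{2})\); a geometric sum over \(j\) preserves this form up to a larger constant. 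For the third (global) inequality I would just apply the first with \(\varepsilon=R\) and \(U=z\sqrt{M}\), observing that \(z\sqrt{M}\cdot R^{1-\nu/2}\asymp z\sqrt{M}\) and that the resulting tail is \(C\exp(-Mz^{2}/C^{2}B)\), which is the required Bernstein form in the sub-Gaussian regime.

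The main obstacle is purely bookkeeping: one must carefully verify that the normalisation \(\Delta_{M}(\BCE)=\sqrt{M}\sum_{m=1}^{M}\{\cdots\}\) used here matches the conventional \(M^{-1/2}\sum\) normalisation of Theorem~5.11 in \citet{V} up to the absolute constants appearing in \(C\) and \(B\), and that the Birgé--Massart threshold \(\varepsilon\ge M^{-1/(2+\nu)}\) is respected simultaneously on every shell in the peeling step. A secondary, more substantive check is that the bracketing entropy hypothesis \eqref{CA1} for \(\mathcal{G}\) is indeed implied by the pseudedistance entropy assumption \eqref{CA} on the stopping regions; this is the content of the forthcoming Corollary~\ref{DFX}, whose proof shows that a \(d_{X}\)-bracket of the stopping regions translates into an \(L_{2}(\P)\)-bracket of the composite functions \(g_{\BCE}\) of the same order, closing the circle.
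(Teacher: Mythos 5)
Your proposal is correct and matches the paper's approach: the paper offers no independent argument for this theorem, stating only that it ``follows directly from Theorem 5.11 in \citet{V}'', which is exactly the reduction you carry out (with the entropy-integral, peeling, and global-radius steps made explicit, and with the correct observation that the $\sqrt{M}\sum$ in the definition of $\Delta_{M}$ must be read as the usual $M^{-1/2}\sum$ normalisation). The only caveat worth recording is that your entropy-integral computation, like the theorem's application in the paper with $\nu=2\rho$, implicitly requires $\nu<2$.
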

Let us define a pseudedistance \( \Delta_{X} \) between any two sets \( \BCE, \BCE' \in \mathfrak{B} \) in the following way
\begin{multline*}
\Delta_{X}(\mathcal{S}_{1}\times\ldots\times\mathcal{S}_{K},\mathcal{S}'_{1}\times\ldots\times \mathcal{S}'_{K})
:=\sum_{k=1}^{K}\P\left(X_{k}\in (\mathcal{S}_{k}\triangle \mathcal{S}'_{k})\setminus \left(\bigcap_{l=k}^{K-1} \mathcal{S}'_{l}\right)\right).
\end{multline*}
It obviously holds \( \Delta_{X}(\BCE,\BCE')\leq d_{X}(\BCE,\BCE') \) for the pseudodistance \( d_{X} \) defined in
The following Lemma will be frequently used in the sequel.
\begin{lem}
\label{BI}
For any \( \BCE,\BCE'\in \mathfrak{B} \) it holds with probability one
\begin{multline}
\label{BasicIneqE}
\left| G_{\tau_{k}(\BCE)}
\left(X_{\tau_{k}(\BCE)}\right)-G_{\tau_{k}(\BCE')}
\left(X_{\tau_{k}(\BCE')}\right) \right|
\\
\leq
\sum_{l=k}^{K-1}|G_{l}(X_{l})-G_{\tau_{l+1}(\BCE)}(X_{\tau_{l+1}(\BCE)})|\mathbf{1}_{\left\{X_{l}\in (\mathcal{S}_{l}\triangle \mathcal{S}'_{l})\setminus \left(\bigcap_{l'=l}^{K-1} \mathcal{S}'_{l'}\right)
\right\}}
\end{multline}
and
\begin{multline}
\label{BasicIneq}
V^{*}_{k}(X_{k})-\E\left[G_{\tau_{k}(\BCE)}(X_{\tau_{k}(\BCE)})|\mathcal{F}_{k}\right]
\\
=\E\left[ \left.\sum_{l=k}^{K-1}\left|G_{l}(X_{l})-\E[V^{*}_{l+1}(X_{l+1})|\mathcal{F}_{l}]\right|\mathbf{1}_{\left\{X_{l}\in (\mathcal{S}^{*}_{l}\triangle \mathcal{S}_{l})\setminus \left(\bigcap_{l'=l}^{K-1} \mathcal{S}_{l'}\right) \right\}}
\right|\mathcal{F}_{k}\right]
\end{multline}
for \( k=1,\ldots,K-1 \).
\end{lem}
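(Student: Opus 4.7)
Both \eqref{BasicIneqE} and \eqref{BasicIneq} will be proved by backward induction on $k$, running from $K-1$ down to $1$. The engine of the induction is the one-step recursion $\tau_k(\BCE) = k\,\mathbf{1}_{\{X_k\in\mathcal{S}_k\}} + \tau_{k+1}(\BCE)\,\mathbf{1}_{\{X_k\notin\mathcal{S}_k\}}$, which isolates the contribution at time $k$ and reduces the remainder to a statement at level $k+1$.

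For \eqref{BasicIneqE}, the base case $k=K-1$ is a four-way case analysis on the joint membership of $X_{K-1}$ in $\mathcal{S}_{K-1}$ and $\mathcal{S}'_{K-1}$: when the two indicators agree the two stopping times coincide and the LHS vanishes, and on the symmetric difference the LHS is $|G_{K-1}(X_{K-1})-G_K(X_K)|$, which matches the $l=K-1$ summand on the RHS (with $\tau_K(\BCE)=K$). For the inductive step, apply the recursion to both $\tau_k(\BCE)$ and $\tau_k(\BCE')$ and partition $E$ into $\mathcal{S}_k\cap\mathcal{S}'_k$, $\mathcal{S}_k\triangle\mathcal{S}'_k$, and $(\mathcal{S}_k\cup\mathcal{S}'_k)^c$: on the first piece the difference at time $k$ is $0$, on the third piece both stopping times reduce to the $\tau_{k+1}$-version and the inductive hypothesis delivers the $l\ge k+1$ portion of the RHS, and on the symmetric-difference piece a single triangle-inequality insertion between $G_k(X_k)$ and $G_{\tau_{k+1}(\BCE)}(X_{\tau_{k+1}(\BCE)})$ produces the $l=k$ summand plus an $l\ge k+1$ residual again dominated by the inductive hypothesis.

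For \eqref{BasicIneq} I combine the Wald--Bellman identity $V^*_k(X_k) = G_k(X_k)\mathbf{1}_{\{X_k\in\mathcal{S}^*_k\}} + \E[V^*_{k+1}(X_{k+1})|\mathcal{F}_k]\mathbf{1}_{\{X_k\notin\mathcal{S}^*_k\}}$ with the tower-rule expansion of $\E[G_{\tau_k(\BCE)}(X_{\tau_k(\BCE)})|\mathcal{F}_k]$ split along $\{X_k\in\mathcal{S}_k\}$ and its complement. Subtracting, on $\{X_k\notin\mathcal{S}_k\}$ the continuation branch contributes $\E[\mathrm{IH}_{k+1}|\mathcal{F}_k]$, while on $\{X_k\in\mathcal{S}_k\}$ the stopping branch contributes $V^*_k(X_k)-G_k(X_k)$. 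A further split by whether $X_k\in\mathcal{S}^*_k$ shows that these combine into exactly the new $l=k$ summand $|G_k(X_k)-\E[V^*_{k+1}(X_{k+1})|\mathcal{F}_k]|\mathbf{1}_{\{X_k\in\mathcal{S}^*_k\triangle\mathcal{S}_k\}}$ plus the lifted inductive hypothesis, with the sign coming out correctly in both off-diagonal quadrants thanks to $G_k\ge\E[V^*_{k+1}|\mathcal{F}_k]$ on $\mathcal{S}^*_k$ and the reverse on its complement.

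The main obstacle is the bookkeeping of the set-theoretic exclusion $\setminus\bigcap_{l'=l}^{K-1}\mathcal{S}'_{l'}$ in \eqref{BasicIneqE} (respectively $\mathcal{S}_{l'}$ in \eqref{BasicIneq}): when the inductive hypothesis at level $k+1$ is fed back into the recursion one must check that the indicators for $l\ge k+1$ remain in force only on paths along which the reference strategy has not already stopped, and that the newly produced $l=k$ indicator correctly matches the symmetric-difference quadrant at level $k$. This is a purely combinatorial check at the level of events, but it is where all the care is needed; once it is carried out the induction closes and both identities follow.
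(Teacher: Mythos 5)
Your plan follows essentially the same route as the paper: backward induction on $k$ driven by the one-step recursion $\tau_k(\BCE)=k\,\mathbf{1}_{\{X_k\in\mathcal{S}_k\}}+\tau_{k+1}(\BCE)\,\mathbf{1}_{\{X_k\notin\mathcal{S}_k\}}$, a quadrant-by-quadrant case analysis (the paper's $S_1,S_2,S_3$ decomposition is exactly your split by membership of $X_k$ in $\mathcal{S}_k$ and $\mathcal{S}^*_k$), and the sign of $G_k(X_k)-\E[V^*_{k+1}(X_{k+1})|\mathcal{F}_k]$ on $\mathcal{S}^*_k$ versus its complement to produce the absolute values. The paper writes out only \eqref{BasicIneq} and dismisses \eqref{BasicIneqE} as ``similar'', and, like you, it leaves the final translation of the path events into the indicators with the exclusion sets $\bigcap_{l'}\mathcal{S}'_{l'}$ largely implicit, so your outline neither departs from nor falls short of the paper's own argument.
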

\begin{proof}
We prove \eqref{BasicIneq} by induction. The inequality \eqref{BasicIneqE} can be proved in a similar way. For \( k=K-1 \)
we get
\begin{multline}
\label{BasiqIneq2}
V^{*}_{K-1}(X_{K-1})-V_{K-1}(X_{K-1})=
\\
\nonumber
=
\E\left[\left.(G_{K-1}(X_{K-1})-G_{K}(X_{K}))
\mathbf{1}_{\{\tau^{*}_{K-1}=K-1,\,\tau_{K-1}=K\}}\right|\mathcal{F}_{K-1}\right]
\\
\nonumber
+\E\left[\left.(G_{K}(X_{K})-G_{K-1}(X_{K-1}))
\mathbf{1}_{\{\tau^{*}_{K-1}=K,\,\tau_{K-1}=K-1\}}\right|\mathcal{F}_{K-1}\right]
\\
\nonumber
=|G_{K-1}(X_{K-1})-\E [G_K(X_{K})|\mathcal{F}_{K-1}]|\mathbf{1}_{\{\tau_{K-1}\neq\tau^{*}_{K-1}\}}
\end{multline}
since events \( \{ \tau^{*}_{K-1}=K \} \) and \( \{\tau_{K-1}=K \} \) are measurable w.r.t. \( \mathcal{F}_{K-1} \)
  and \( G_{K-1}(X_{K-1})\geq  \E [G_K(X_{K})|\mathcal{F}_{K-1}]\) on the set \( \{ \tau^{*}_{K-1}=K-1 \}. \) Thus, \eqref{BasicIneq} holds with \( k=K-1 \). Suppose
that \eqref{BasicIneq} holds with
\( k=K'+1 \). Let us prove it for \( k=K' \). Consider a decomposition
\begin{eqnarray}
\label{DecompBI}
G_{\tau^{*}_{K'}}(X_{\tau^{*}_{K'}})-G_{\tau_{K'}}(X_{\tau_{K'}})&=& S_{1}+S_{2}+S_{3}
\end{eqnarray}
with
\begin{eqnarray*}
S_{1}&:=&\left( G_{\tau^{*}_{K'}}(X_{\tau^{*}_{K'}})-G_{\tau_{K'}}(X_{\tau_{K'}}) \right)
\mathbf{1}_{\{\tau^{*}_{K'}>K',\,\tau_{K'}>K'\}},
\\
S_{2}&:=&\left( G_{\tau^{*}_{K'}}(X_{\tau^{*}_{K'}})-G_{\tau_{K'}}(X_{\tau_{K'}}) \right)
\mathbf{1}_{\{\tau^{*}_{K'}>K',\,\tau_{K'}=K'\}},
\\
S_{3}&:=&\left( G_{\tau^{*}_{K'}}(X_{\tau^{*}_{K'}})-G_{\tau_{K'}}(X_{\tau_{K'}}) \right)
\mathbf{1}_{\{\tau^{*}_{K'}=K',\,\tau_{K'}>K'\}}.
\end{eqnarray*}
Using the fact that \( \tau_{k}=\tau_{k+1}   \) if \( \tau_{k}>k  \) for any \( k=1,\ldots,K-1 \), we get
\begin{eqnarray*}
\E\left[S_{1}|\mathcal{F}_{K'}\right]
&=&
\E\left[ \left.\left( V^{*}_{K'+1}(X_{K'+1})-V_{K'+1}(X_{K'+1}) \right)
\mathbf{1}_{\{\tau^{*}_{K'}>K',\,\tau_{K'}>K'\}}\right|\mathcal{F}_{K'}\right],
\\
\E\left[S_{2}|\mathcal{F}_{K'}\right]&=&
\left( \E\left[ \left. G_{\tau^{*}_{K'+1}}(X_{\tau^{*}_{K'+1}})\right | \mathcal{F}_{K'} \right]
 -G_{K'}(X_{K'}) \right)
\mathbf{1}_{\{\tau^{*}_{K'}>K',\,\tau_{K'}=K'\}}
\\
&=&\left(\E\left[ \left. V^{*}_{K'+1}(X_{K'+1})\right | \mathcal{F}_{K'} \right] -G_{K'}(X(t_{K'})) \right)
\mathbf{1}_{\{\tau^{*}_{K'}>K',\,\widehat\tau_{K'}=K'\}}
\end{eqnarray*}
and
\begin{eqnarray*}
\E\left[S_{3}|\mathcal{F}_{K'}\right]&=&
\left( G_{K'}(X_{K'})-\E\left[ G_{\tau_{K'+1}}(X_{\tau_{K'+1}})|\mathcal{F}_{K'}\right]  \right)
\mathbf{1}_{\{\tau^{*}_{K'}=K',\,\tau_{K'}>K'\}}
\\
&=&\left(G_{K'}(X_{K'})-\E[V^{*}_{K'+1}(X_{K'+1})|\mathcal{F}_{K'}]\right)
\mathbf{1}_{\{\tau^{*}_{K'}=K',\,\tau_{K'}>K'\}}
\\
&&+\E\left[ \left.\left( V^{*}_{K'+1}(X_{K'+1})-V_{K'+1}(X_{K'+1}) \right)
\mathbf{1}_{\{\tau^{*}_{K'}=K',\,\tau_{K'}>K'\}}\right|\mathcal{F}_{K'}\right],
\end{eqnarray*}
with probability one. Hence
\begin{eqnarray*}
V^{*}_{K'}(X_{K'})-V_{K'}(X_{K'})&=&
\left|G_{K'}(X_{K'})-\E[V^{*}_{K'+1}(X_{K'+1})|\mathcal{F}_{K'}]\right|
\mathbf{1}_{\{\tau_{K'}\neq\tau^{*}_{K'}\}}
\\
&&+\E\left[\left.\left( V^{*}_{K'+1}(X_{K'+1})-V_{K'+1}(X_{K'+1}) \right)
\right|\mathcal{F}_{K'}\right]\mathbf{1}_{\{\tau_{K'}>K'\}}
\end{eqnarray*}
since \( G_{K'}(X_{K'})-\E[V^{*}_{K'+1}(X_{K'+1})\geq 0 \) on the set \( \{ \tau^{*}_{K'}=K' \}. \)
Our induction assumption implies now that
\begin{multline*}
V_{K'}^{*}(X_{K'})-V_{K'}(X_{K'})=
\\
\E\left[ \sum_{k=K'}^{K-1}|G_{l}(X_{l})-\E[V^{*}_{l+1}(X_{l+1})|\mathcal{F}_{l}]|\mathbf{1}_{\{\tau_{k}\neq \tau^{*}_{k},\tau_{k}>k,\ldots,\tau_{K-1}>K-1\}}|\mathcal{F}_{K'}\right]
\end{multline*}
and hence \eqref{BasicIneq} holds with \( k=K' \).
\end{proof}
\begin{cor}
\label{DFX}
If  \( \max_{k=1,\ldots,K}\| G_{k} \|_{\infty}<A_{G} \)  with some constant \( A_{G}>0 \), then
\begin{eqnarray*}
    \Delta_{G}(\BCE,\BCE')\leq 2A_{G}\sqrt{2\Delta_{X}(\BCE,\BCE')}
\end{eqnarray*}
for any \( \BCE,\BCE'\in \mathfrak{B}. \)
\end{cor}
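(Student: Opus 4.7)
My plan is to bound the pointwise squared difference $|G_{\tau_1(\BCE)}(X_{\tau_1(\BCE)}) - G_{\tau_1(\BCE')}(X_{\tau_1(\BCE')})|^{2}$ by something whose expectation is exactly $\Delta_X(\BCE,\BCE')$ (up to a constant), and then take a square root. The key input is Lemma~\ref{BI}, specialised to $k=1$, which expresses the pointwise payoff discrepancy as a sum over time steps of payoff differences weighted by indicators of the ``disagreement'' events $\{X_l\in (\mathcal{S}_l\triangle\mathcal{S}'_l)\setminus \bigcap_{l'=l}^{K-1}\mathcal{S}'_{l'}\}$ — and these are precisely the events whose probabilities define $\Delta_X$.

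Concretely, I would proceed as follows. First, apply \eqref{BasicIneqE} with $k=1$, so that
\[
\bigl|G_{\tau_{1}(\BCE)}(X_{\tau_{1}(\BCE)}) - G_{\tau_{1}(\BCE')}(X_{\tau_{1}(\BCE')})\bigr|
\leq \sum_{l=1}^{K-1}\bigl|G_{l}(X_{l})-G_{\tau_{l+1}(\BCE)}(X_{\tau_{l+1}(\BCE)})\bigr|\,\mathbf{1}_{A_l},
\]
where $A_l:=\{X_{l}\in(\mathcal{S}_{l}\triangle\mathcal{S}'_{l})\setminus \bigcap_{l'=l}^{K-1}\mathcal{S}'_{l'}\}$. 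Next, use the uniform bound $\|G_k\|_\infty<A_G$ to estimate each summand by $2A_G\mathbf{1}_{A_l}$, and also bound the whole left-hand side by $2A_G$. Combining these gives the key squared-to-linear bound
\[
\bigl|G_{\tau_{1}(\BCE)}(X_{\tau_{1}(\BCE)}) - G_{\tau_{1}(\BCE')}(X_{\tau_{1}(\BCE')})\bigr|^{2}
\leq 2A_G\cdot 2A_G\sum_{l=1}^{K-1}\mathbf{1}_{A_l}
= 4A_G^{2}\sum_{l=1}^{K-1}\mathbf{1}_{A_l},
\]
which sidesteps the issue that the $\mathbf{1}_{A_l}$ are not mutually exclusive (so a naive Cauchy--Schwarz would introduce an unwanted factor of $K$).

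Finally, take expectation under $\P_x$ and observe that $\P_x(A_l)$ is exactly the $l$-th term in $\Delta_X(\BCE,\BCE')$, while the missing $l=K$ term vanishes since $\mathcal{S}_K=\mathcal{S}'_K=E$ forces $\mathcal{S}_K\triangle\mathcal{S}'_K=\emptyset$. Thus
\[
\Delta_{G}^{2}(\BCE,\BCE')\leq 4A_{G}^{2}\,\Delta_{X}(\BCE,\BCE'),
\]
and extracting the square root yields $\Delta_{G}(\BCE,\BCE')\leq 2A_{G}\sqrt{\Delta_{X}(\BCE,\BCE')}$, which is stronger than (and therefore implies) the claimed bound with the extra factor $\sqrt{2}$. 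I do not anticipate a real obstacle here; the only subtle point is avoiding the $K$-fold loss that Cauchy--Schwarz would introduce when squaring the sum of indicators, and this is circumvented by exploiting uniform boundedness of $|G_{\tau_1(\BCE)}-G_{\tau_1(\BCE')}|$ itself to convert one factor of the square into a constant.
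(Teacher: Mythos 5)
Your proof is correct and follows essentially the same route as the paper, which likewise derives the bound directly from \eqref{BasicIneqE} combined with the uniform bound \( |G_{\tau}(X_{\tau})|\leq A_{G} \). Your careful handling of the squaring step even yields the slightly sharper constant \( 2A_{G}\sqrt{\Delta_{X}} \), which of course implies the stated inequality with the extra factor \( \sqrt{2} \).
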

\begin{proof}
Follows directly from \eqref{BasicIneqE} since \( G_{\tau}(X_{\tau}) \leq A_{G} \) a.s.
for any stopping time \( \tau  \) taking values in \( \{ 1,\ldots, K \}. \)
\end{proof}

\begin{cor}
\label{DDX}
 Assume that \eqref{BA} holds for \( \delta < \delta_{0}<1/2  \), then there exist constants \( \upsilon_{\alpha} \) and
\( \delta_{\alpha} \) such that
\begin{eqnarray}
\label{BAD}
    \Delta(\BCE)\geq \upsilon_{\alpha}\Delta^{(1+\alpha)/\alpha}_{X}(\BCE^{*},\BCE)
\end{eqnarray}
for all \( \BCE \in \mathfrak{B}\) satisfying \( \Delta_{X}(\BCE^{*},\BCE)\leq \delta_\alpha \). Moreover
it holds
\begin{eqnarray}
\label{BAD1}
\Delta_{X}(\BCE^{*},\BCE)\leq \left( \frac{2^{1/\alpha}}{\delta_{0}} \right)
 \Delta(\BCE)+\frac{\delta_{\alpha}}{2(1+\alpha)}.
\end{eqnarray}
for any  \( \BCE \in \mathfrak{B}.\)
\end{cor}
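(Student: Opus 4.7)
The plan is to combine Lemma~\ref{BI} with the margin assumption \eqref{BA} in order to transfer the value gap $\Delta(\BCE)$ to the symmetric-difference pseudodistance $\Delta_X(\BCE^{*},\BCE)$. Taking unconditional expectation in \eqref{BasicIneq} at $k=1$ yields the representation
\begin{equation*}
\Delta(\BCE)=\sum_{l=1}^{K-1}\E\!\left[Z_l\,\mathbf{1}_{A_l}\right],\qquad Z_l:=\bigl|G_l(X_l)-\E[V^{*}_{l+1}(X_{l+1})|\mathcal{F}_l]\bigr|,
\end{equation*}
where $A_l:=\{X_l\in(\mathcal{S}^{*}_l\triangle\mathcal{S}_l)\setminus(\bigcap_{l'=l}^{K-1}\mathcal{S}_{l'})\}$. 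Comparing with the defining formula for $\Delta_X$ stated just above Lemma~\ref{BI}, one observes that $\Delta_X(\BCE^{*},\BCE)=\sum_{l=1}^{K-1}\P(A_l)$ (the $l=K$ term vanishes because $\mathcal{S}_K=\mathcal{S}^{*}_K=E$); this identity is what links the two pseudodistances.

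Next I would derive a single master inequality by thresholding $Z_l$ at a level $\delta\in(0,\delta_0)$. The elementary bound $\E[Z_l\mathbf{1}_{A_l}]\geq\delta\bigl(\P(A_l)-\P(Z_l<\delta)\bigr)$, combined with assumption \eqref{BA} and summation in $l$, gives
\begin{equation*}
\Delta(\BCE)\ \geq\ \delta\,\Delta_X(\BCE^{*},\BCE)\ -\ A_0\,\delta^{1+\alpha},\qquad 0<\delta<\delta_0,
\end{equation*}
with $A_0:=\sum_{l=1}^{K-1}A_{0,l}$. This one inequality is the workhorse for both parts of the corollary.

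For \eqref{BAD} I would optimise the right-hand side in $\delta$: the maximiser is $\delta^{*}=\bigl(\Delta_X(\BCE^{*},\BCE)/((1+\alpha)A_0)\bigr)^{1/\alpha}$, which is admissible ($\delta^{*}<\delta_0$) precisely under the assumed smallness $\Delta_X(\BCE^{*},\BCE)\leq\delta_\alpha:=(1+\alpha)A_0\delta_0^{\alpha}$; substitution then identifies the constant $\upsilon_\alpha$ with $\alpha\,[(1+\alpha)^{(1+\alpha)/\alpha}A_0^{1/\alpha}]^{-1}$. For \eqref{BAD1}, which must hold for every $\BCE\in\mathfrak{B}$ including those with large $\Delta_X$, I would instead plug the fixed non-optimal value $\delta=\delta_0/2^{1/\alpha}<\delta_0$ into the master inequality and rearrange; the additive constant $A_0\delta_0^{\alpha}/2$ that appears equals exactly $\delta_\alpha/(2(1+\alpha))$ by the definition of $\delta_\alpha$, matching the stated bound.

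The only genuinely delicate step is the first one, namely recognising that the indicator sets produced by Lemma~\ref{BI} reconstruct exactly the pseudodistance $\Delta_X$ of the previous subsection, with the ``cut-out'' set $\bigcap_{l'=l}^{K-1}\mathcal{S}_{l'}$ drawn from $\BCE$ and not from $\BCE^{*}$. Once the representation $\Delta(\BCE)=\sum\E[Z_l\mathbf{1}_{A_l}]$ is in place, the rest reduces to a one-variable optimisation of the concave-minus-convex map $\delta\mapsto\delta c-A_0\delta^{1+\alpha}$, which is entirely routine.
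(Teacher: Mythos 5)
Your proposal is correct and follows essentially the same route as the paper: both apply the identity \eqref{BasicIneq} from Lemma~\ref{BI}, threshold the integrand at a level $\delta<\delta_0$ and invoke \eqref{BA} to obtain the master inequality $\Delta(\BCE)\geq\delta\,\Delta_X(\BCE^{*},\BCE)-A_0\delta^{1+\alpha}$, then optimise in $\delta$ for \eqref{BAD} and substitute $\delta=\delta_0/2^{1/\alpha}$ for \eqref{BAD1}. Your constants $\upsilon_\alpha$ and $\delta_\alpha$ coincide with those in the paper, so there is nothing to add.
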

\begin{proof}
For any \( \delta\leq\delta_{0} \) define the sets
\[
\mathcal{A}_{k}:=\left\{x\in \mathbb{R}^{d}:\left|\E[V^{*}_{k+1}(X_{k+1})|X_{k}=x]-G_{k}(x)\right|>\delta \right\},\quad
k=1,\ldots,K-1.
\]
Due to \eqref{BasicIneq} we have
\begin{eqnarray}
\notag
\Delta(\BCE)&\geq& \delta\sum_{k=1}^{K-1}\P\left(X_{k}\in (\mathcal{S}^{*}_{k}\triangle \mathcal{S}_{k})\setminus \left(\bigcap_{l=k}^{K-1}
\mathcal{S}_{k}\right)\bigcap\mathcal{A}_{k}\right)
\\
\notag
&\geq&
\delta\sum_{k=1}^{K-1}\left\{ \P\left(X_{k}\in (\mathcal{S}^{*}_{k}\triangle \mathcal{S}_{k})\setminus \left(\bigcap_{l=k}^{K-1}
\mathcal{S}_{k}\right)\right)-\P(\bar{\mathcal{A}}_{k}) \right\}
\\
\label{BADP1}
&\geq& \delta [\Delta_{X}(\BCE^{*},\BCE)-A_{0}\delta^{\alpha}]
\end{eqnarray}
with \( A_{0}=\sum_{k=1}^{K-1}A_{k,0},  \) where \( A_{k,0} \) were defined in \eqref{BA}.
The maximum of \eqref{BADP1} is attained at \( \delta^{*}=[\Delta_{X}(\BCE^{*},\BCE)/(\alpha+1)A_{0}]^{1/\alpha}
\). Since \( \delta^{*}\leq\delta_{0} \) for
\( \Delta_{X}(\BCE^{*},\BCE)\leq A_{0}(\alpha+1)\delta^{\alpha}_{0} \)
the inequality  \eqref{BAD} holds with \( \upsilon_{\alpha}:=A_{0}^{-1/\alpha}\alpha(1+\alpha)^{-1-1/\alpha} \)
and \( \delta_{\alpha}:=A_{0}(\alpha+1)\delta^{\alpha}_{0}\). The inequality \eqref{BAD1} follows directly from
\eqref{BADP1} by taking \( \delta = \delta_{0}/2^{1/\alpha}.  \)
\end{proof}

\end{document}